\newcommand{\R}{\mathbb{R}}
\newcommand{\E}{\mathbb{E}}
\newcommand{\eqdef}{~{:=}~}
\newcommand{\bX}{\mathbf{X}}
\newcommand{\bP}{\mathbf{P}}
\newcommand{\Prob}{\mathbb{P}}
\newcommand{\bQ}{\mathbf{Q}}
\newcommand{\bM}{\mathbf{M}}
\newcommand{\bI}{\mathbf{I}}
\newcommand{\Diag}{{\rm Diag}}
\newcommand{\CP}{C_P}
\newcommand{\CD}{C_D}
\newcommand{\BB}{\mathbb{B}^{d\times n}_{\neq 0}}
\newcommand{\mywidth}{0.25}
\DeclareMathOperator*{\argmin}{arg\,min}
\DeclareMathOperator*{\argmax}{arg\,max}
\theoremstyle{plain}
\newtheorem{theorem}{Theorem}
\newtheorem{corollary}[theorem]{Corollary}
\newtheorem{lemma}[theorem]{Lemma}
\title{Coordinate Descent Face-Off: Primal or Dual?}
\author{
  Dominik Csiba  
\qquad 
   Peter Richt\'{a}rik\thanks{This author would like to acknowledge support from the EPSRC Fellowship EP/N005538/1, ``Randomized Algorithms for Extreme Convex Optimization''.} \\ \\
   {\em School of Mathematics}\\
   {\em University of Edinburgh}\\
   {\em United Kingdom}
}
\begin{document}

\maketitle

\begin{abstract}
Randomized coordinate descent (RCD) methods are state-of-the-art algorithms for training linear predictors via minimizing regularized empirical risk. When the number of examples ($n$) is much larger than the number of features ($d$), a common strategy is to apply RCD to the dual problem. On the other hand, when the number of features is much larger than the number of examples, it makes sense to apply RCD directly to the primal problem. In this paper we provide the first joint study of these two approaches when applied to L2-regularized ERM. First, we show through a rigorous analysis that for dense data, the above intuition is precisely correct. However, we find that for sparse and structured data,  primal RCD can significantly outperform dual RCD even if $d\ll n$, and vice versa, dual RCD can be much faster than primal RCD even if $n\ll d$. Moreover, we show that, surprisingly, a single sampling strategy minimizes both the (bound on the) number of iterations and the overall expected complexity of RCD. Note that the latter complexity measure also takes into account  the average cost of the iterations, which depends on the structure and sparsity of the data, and on the sampling strategy employed.  We confirm our theoretical predictions using extensive experiments with both synthetic and real data sets. 
\end{abstract}

\section{Introduction}

In the last 5 years or so, randomized coordinate descent (RCD) methods  \cite{ShalevTewari11, Nesterov:2010RCDM,  UCDC, PCDM} have  become immensely  popular in a variety of machine learning  tasks, with supervised learning being a prime example. The main reasons behind the rise of RCD-type methods is that they can be easily  implemented, have intuitive appeal, and enjoy superior theoretical and practical behaviour when compared to classical methods such as  SGD \cite{robbins1951}, especially in high dimensions, and in situations when solutions of medium to high accuracy are needed. One of the most important success stories of RCD is in the domain of training linear predictors via regularized empirical risk minimization (ERM).

The highly popular SDCA algorithm \cite{SDCA} arises as the application of RCD \cite{UCDC} to the {\em dual problem} associated with the (primal) ERM problem\footnote{Indeed, the analysis of SDCA in \cite{SDCA} proceeds by applying the complexity result from \cite{UCDC} to the {\em dual problem},  and then arguing that the same rate applies to the primal suboptimality as well.}. In practice, SDCA is most effective  in situations where the number of examples ($n$) exceeds the number of features ($d$).  Since the dual of ERM is an $n$ dimensional problem, it makes intuitive sense to apply RCD to the dual. Indeed, RCD can be seen as a randomized decomposition strategy, reducing the $n$ dimensional problem to a sequence of (randomly generated) one-dimensional problems. 

However, if the number of features exceeds the number of examples, and especially when the difference is very large, RCD methods \cite{PCDM} have been found very attractive for solving the {\em primal  problem} (i.e., the ERM problem) directly. For instance, distributed variants of RCD, such as Hydra \cite{Hydra} and its accelerated cousin Hydra$^2$ \cite{Hydra2} have been successfully applied to solving problems with billions of features. 

Recently, a  variety of novel primal methods for ERM have been designed, including  SAG \cite{SAG}, SVRG \cite{SVRG}, S2GD \cite{S2GD}, proxSVRG \cite{proxSVRG}, mS2GD \cite{mS2GD}, SAGA \cite{SAGA}, MISO \cite{MISO} and S2CD~\cite{S2CD}. As SDCA, all these methods improve dramatically on SGD \cite{robbins1951} as a benchmark,  which they achieve by employing one of a number of variance-reduction strategies. However, these methods have essentially identical identical theoretical behavior to SDCA, including the property that these methods thrive in the data-laden domain (i.e., $n\gg d$). In this sense, in our comparison of primal vs dual RCD, these methods should be viewed as ``dual methods''.

\subsection{Contributions}
 In this paper we provide the first joint study of these two approaches---applying RCD to the primal vs dual problems---and we do so in the context of  L2-regularized ERM. First, we show through a rigorous theoretical analysis that for dense data, the intuition that the primal approach is better than the dual approach when $n\geq d$, and vice versa, is precisely correct. However, we show that for sparse data, this does not need to be the case:  primal RCD can significantly outperform dual RCD even if $d\ll n$, and vice versa, dual RCD can be much faster than primal RCD even if $n\ll d$. In particular, we identify that the face-off between primal and dual RCD boils down to the comparison of as single quantity associated with the data matrix and its transpose.  Moreover, we show that, surprisingly, a single sampling strategy minimizes both the (bound on the) number of iterations and the overall expected complexity of RCD. Note that the latter complexity measure takes into account also the average cost of the iterations, which depends on the structure and sparsity of the data, and on the sampling strategy employed.  We confirm our theoretical findings using extensive experiments with both synthetic and real data sets.

\section{Primal and Dual Formulations of ERM}

Let  $\bX\in \R^{d\times n}$ be a data matrix, with $n$ referring to the number of examples and $d$ to the number of features. With each example $\bX_{:j}\in \R^d$ we associate a loss function $\phi_j:\R\to \R$, and pick a regularization constant $\lambda>0$. The key problem of this paper is the L2-regularized ERM problem
\begin{equation}
\label{def:Primal}
\min_{w \in \R^d} \left[ P(w) \eqdef \frac{1}{n}\sum_{j=1}^n \phi_j(\langle \bX_{:j}, w \rangle) + \frac{\lambda}{2}\|w\|_2^2 \right],
\end{equation}
where $\langle\cdot, \cdot\rangle$ denotes the standard Euclidean inner product and $\|w\|_2\eqdef\sqrt{\langle w,w\rangle}$. We refer to \eqref{def:Primal} as the  {\em primal problem}. We assume throughout that the functions $\{\phi_j\}$ are convex and $\beta$-smooth: \begin{equation} \label{eq:phi_smooth}
\phi_j(s) + \phi_j'(s)t  \leq \phi_j(s + t) \leq \phi_j(s) + \phi_j'(s)t + \frac{\beta}{2}t^2, \qquad \text{for all} \qquad s,t\in \R.
\end{equation}
The {\em dual problem} of \eqref{def:Primal}  is
\begin{equation}\label{def:Dual} 
\max_{\alpha \in \R^n} \left[ D(\alpha) \eqdef - \frac{1}{2\lambda n^2}\left\| \bX\alpha\right\|_2^2 - \frac{1}{n}\sum_{j=1}^n \phi_j^*(-\alpha_j)  \right],
\end{equation} 
where $\phi_j^*:\R\to \R$ is the convex conjugate of $\phi_j$, defined by $
\phi_j^*(s) \eqdef \sup \{ st - \phi_j(t) \;:\; t \in \R\}.$ 
It is well known that \cite{SDCA, Quartz} that $P(w) \geq D(\alpha)$ for every pair $(w, \alpha) \in \R^d \times \R^n$ and $P(w^*) = D(\alpha^*)$. Moreover, the primal and dual optimal solutions, $w^*$ and $\alpha^*$, respectively, are unique, and satisfy the relations $w^* = \frac{1}{\lambda n} \bX \alpha^*$ and $\alpha_j^* = \phi_j'(\langle \bX_{:j}, w^* \rangle)$ for all  $j \in [n] \eqdef \{1, \dots, n \}$, which also uniquely characterize them.

\section{Primal and Dual RCD}

In its general ``arbitrary sampling''  form \cite{NSync}, RCD applied to the primal problem \eqref{def:Primal} has the form \begin{equation}\label{eq:RCD-primal-text}w_i^{k+1} \leftarrow w_i^k -  \frac{1}{u'_i}\nabla_i P(w^k)\quad \text{for} \quad i\in S_k, \qquad \qquad w_i^{k+1} \leftarrow w_k^k \quad \text{for} \quad i\notin S_k,\end{equation}
where $u'_1,\dots,u'_d>0$ are parameters of the method and $\nabla_i P(w) = \frac{1}{n}\sum_{j=1}^n \phi_j'(\langle \bX_{:j}, w\rangle ) \bX_{ij} + \lambda w_i$ is the $i$th partial derivative of $P$ at $w$. This update is performed for a random subset of the coordinates $i\in S_k\subseteq [d]$ chosen in an i.i.d.\ fashion according to some sampling $\hat{S}_P$. The parameters $u'_i$ are computed ahead of the iterative process and need to be selected carefully in order for the method to work \cite{NSync, ESO}. Specifically, one can set $u'_i \eqdef \frac{\beta}{n}u_i + \lambda$, where $u=(u_1,\dots,u_d)$ is chosen so as to satisfy the ESO (expected separable overapproximation) inequality \begin{equation}\label{eq:ESO_P}\bP \circ \bX \bX^\top \preceq \Diag(p\circ u),\end{equation} where $\bP$ is the $d\times d$ matrix with entries $\bP_{ij} = \Prob(i\in \hat{S}, j\in \hat{S})$, $p  = \Diag(\bP)\in \R^d$ and $\circ$ denotes the Hadamard (element-wise) product of matrices. The method is formally described as Algorithm~\ref{alg:nsync}.


\begin{algorithm}[H]
\caption{Primal RCD: NSync \cite{NSync}} \label{alg:nsync}
\begin{algorithmic}
\STATE \textbf{Input:} initial iterate $w^0 \in \R^d$; sampling $\hat{S}_P$; ESO parameters $u_1, \dots, u_d > 0$
\STATE \textbf{Initialize:} $z^0 = \bX^\top w^0$
\FOR{$k=0,1,\dots $}
\STATE Sample $S_k \subseteq [d]$ according to $\hat{S}_P$
\FOR{$i \in S_k$}
\STATE Compute $\Delta^{k}_i = -\frac{n}{\beta u_i + \lambda n}\left( \frac{1}{n}\sum_{j=1}^n \phi_j'(z_j^k) \bX_{ij} + \lambda w_i^k \right)$
\STATE Update $w_i^{k+1} = w_i^k + \Delta_i^k$
\ENDFOR
\FOR{$i\notin S_k$}
\STATE $w_i^{k+1} = w_i^k$
\ENDFOR
\STATE Update $z^{k+1} = z^k + \sum_{i \in S_k} \Delta_i^k\bX_{i:}^\top $
\ENDFOR
\end{algorithmic}
\end{algorithm}

When applying RCD to the dual problem \ref{def:Dual}, we can't proceed as above since the functions $\phi_j^*$ are not necessarily smooth, and hence we can't compute the partial derivatives of the dual objective. The standard approach here is to use a proximal variant of RCD \cite{PCDM}. In particular, Algorithm~\ref{alg:quartz} has been analyzed in \cite{Quartz}. Like Algorithm~\ref{alg:nsync}, Algorithm~\ref{alg:quartz} is also capable to work with an arbitrary sampling, which in this case is a random subset of $[n]$. The ESO parameters $v=(v_1,\dots,v_j)$ must in this case satisfy the ESO inequality \begin{equation}\label{eq:ESO_D}\bQ\circ \bX^\top \bX \preceq \Diag(q\circ v),\end{equation} where $\bQ$ is  an $n\times n$ matrix with entries $\bQ_{ij} = \Prob(i\in \hat{S}_D, j\in \hat{S}_D)$ and $q = \Diag(\bQ)\in \R^n$.

\begin{algorithm}[H]
\caption{Dual RCD: Quartz \cite{Quartz}} \label{alg:quartz}
\begin{algorithmic}
\STATE \textbf{Input:} initial dual variables $\alpha^0\in \R^n$, sampling $\hat{S}_D$; ESO parameters $v_1, \dots, v_n > 0$
\STATE \textbf{Initialize:} set $w^0 = \frac{1}{\lambda n} \bX \alpha^0 $
\FOR{$k=0, 1, \dots$}
\STATE Sample $S_k \subseteq [n]$ according to $\hat{S}_D$ 
\FOR{$j \in S_k$}
\STATE Compute $\Delta_j^k = \argmax_{h \in \R} \left\{ -\phi_j^*(-(\alpha_j^k + h)) - h\langle \bX_{:j}, w^k \rangle - \frac{v_j h^2}{2 \lambda n} \right\}$
\STATE Update  $\alpha_j^{k+1} = \alpha_j^k + \Delta_j^k $
\ENDFOR
\FOR{$j\notin S_k$}
\STATE $\alpha_j^{k+1} = \alpha_j^k$
\ENDFOR
\STATE Update $w^{k+1} = w^k + \frac{1}{\lambda n} \sum_{j \in S_k} \Delta_j^k \bX_{:j}$
\ENDFOR
\end{algorithmic}
\end{algorithm} 

 If we assume that $|\hat{S}_P|=1$ (resp.\ $|\hat{S}_D|=1$) with probability 1 (i.e., of the samplings are ``serial''), then it is trivial to observe that \eqref{eq:ESO_P} (resp.\ \eqref{eq:ESO_D}) holds with 
 \begin{equation}\label{eq:98s98hsd}u=\Diag(\bX \bX^\top) \qquad \text{(resp. } v=\Diag(\bX^\top \bX) \text{)}.\end{equation}
Easily computable expressions for $u$ (resp.\ $v$) for more complicated samplings can be found in \cite{ESO}.


\section{Iteration Complexity and Total Arithmetic Complexity}

In this section we give expressions for the total expected arithmetic complexity of the two algorithms.

\subsection{Number of iterations} Iteration complexity of Algorithms~\ref{alg:nsync} and \ref{alg:quartz} is described in the following theorem. 
We include a proof sketch in the appendix.

\begin{theorem} \label{thm:primal_and_dual} \textbf{(Complexity: Primal vs Dual RCD)}
Let $\{\phi_j\}$ be convex and $\beta$-smooth.
\\ (i) If $\hat{S}_P$ is proper (i.e., $p_i>0$ for all $i$), and $u$  satisfies \eqref{eq:ESO_P}, then iterates of primal RCD satisfy
\begin{equation}\label{eq:K_P}k \geq K_P = K_P( \hat{S}_P,\epsilon)\eqdef \max_{i \in [d]} \left( \frac{\beta u_i + \lambda n }{p_i \lambda n }\right) \log\left( \frac{\CP}{\epsilon} \right) \quad \Rightarrow \quad \E[P(w^k) - P(w^*)] \leq \epsilon,\end{equation}
where $\CP$ is a constant depending on $w^0$ and $w^*$.\\
(ii) If $\hat{S}_D$ is proper (i.e., $q_i>0$ for all $i$),  and $v$ satisfies \eqref{eq:ESO_D}, then  iterates of dual RCD satisfy
\begin{equation}\label{eq:K_D}k \geq K_D = K_D(\hat{S}_D,\epsilon) \eqdef \max_{j \in [n]} \left( \frac{\beta v_j + \lambda n }{q_j \lambda n }\right) \log\left( \frac{\CD}{\epsilon} \right) \quad \Rightarrow \quad \E[P(w^k) - P(w^*)] \leq \epsilon, \end{equation}
where $\CD$ is a constant depending on $w^0$ and $w^*$.

\end{theorem}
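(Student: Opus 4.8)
The plan is to treat the two parts separately, since each is a linear-convergence statement about a different algorithm, but in both cases the key ingredients are the same: an expected one-step decrease coming from the relevant ESO inequality, combined with a strong-convexity lower bound that upgrades this decrease into a geometric contraction. For part (i) I would work directly with the primal objective $P$ and show that the NSync update \eqref{eq:RCD-primal-text} contracts $\E[P(w^k)-P(w^*)]$ at the claimed rate; for part (ii) I would instead track the duality gap of the Quartz iterates and use weak duality to pass to the primal suboptimality.

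\textbf{Part (i).} First I would record that $P$ is $\lambda$-strongly convex with respect to $\|\cdot\|_2$, which is immediate from the $\frac{\lambda}{2}\|w\|_2^2$ term and convexity of the $\phi_j$. Minimizing the strong-convexity lower bound coordinatewise yields the Polyak-type inequality $\|\nabla P(w)\|_2^2 \geq 2\lambda(P(w)-P(w^*))$. Next I would verify that $P$ admits the ESO with parameters $u_i' \eqdef \frac{\beta}{n}u_i+\lambda$: the loss part $\frac{1}{n}\sum_j \phi_j(\langle \bX_{:j},\cdot\rangle)$ inherits curvature $\frac{\beta}{n}\bX\bX^\top$ from $\beta$-smoothness, so \eqref{eq:ESO_P} supplies smooth-part parameters $\frac{\beta}{n}u_i$, while the separable regularizer contributes $\lambda$ per coordinate. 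Plugging the NSync step $\Delta_i^k=-\nabla_i P(w^k)/u_i'$ into this ESO produces the expected descent $\E[P(w^{k+1})] \leq P(w^k) - \tfrac12\sum_i \tfrac{p_i}{u_i'}(\nabla_i P(w^k))^2$. Bounding $\tfrac{p_i}{u_i'}\geq \min_\ell \tfrac{p_\ell}{u_\ell'}$ and applying the Polyak inequality gives the contraction $\E[P(w^{k+1})-P(w^*)]\leq (1-\rho)(P(w^k)-P(w^*))$ with $\rho=\min_i \frac{p_i\lambda}{u_i'}$. Unrolling and demanding $(1-\rho)^k \leq \epsilon/\CP$ with $\CP=P(w^0)-P(w^*)$ yields $K_P=\rho^{-1}\log(\CP/\epsilon)=\max_i\frac{\beta u_i+\lambda n}{p_i\lambda n}\log(\CP/\epsilon)$, which is exactly \eqref{eq:K_P}.

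\textbf{Part (ii).} Here I would use that $\beta$-smoothness of each $\phi_j$ is equivalent to $\frac{1}{\beta}$-strong convexity of its conjugate $\phi_j^*$, so the dual objective $D$ is strongly concave along each coordinate with modulus governed by $\frac{1}{\beta}$ together with the quadratic term $-\frac{1}{2\lambda n^2}\|\bX\alpha\|_2^2$. The argmax step of Quartz is precisely a proximal coordinate step whose expected progress is controlled by the dual ESO \eqref{eq:ESO_D}. Following the Quartz analysis, I would build a potential combining the dual suboptimality $D(\alpha^*)-D(\alpha^k)$ with a term measuring $\|w^k-w^*\|_2$, and show that one step shrinks it by a factor $1-\theta$ with $\theta = \min_j \frac{q_j\lambda n}{\beta v_j + \lambda n}$; the strong convexity of $\phi_j^*$ is what supplies the $\lambda n/\beta$ in the denominator and keeps $\theta>0$. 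The same estimate would in fact bound the expected duality gap, $\E[P(w^k)-D(\alpha^k)]\leq (1-\theta)^k \CD$. Finally, weak duality $D(\alpha^k)\leq D(\alpha^*)=P(w^*)$ gives $P(w^k)-P(w^*)\leq P(w^k)-D(\alpha^k)$, so the gap bound transfers to primal suboptimality, and solving $(1-\theta)^k\leq\epsilon/\CD$ for $k$ produces $K_D=\theta^{-1}\log(\CD/\epsilon)=\max_j\frac{\beta v_j+\lambda n}{q_j\lambda n}\log(\CD/\epsilon)$, i.e.\ \eqref{eq:K_D}.

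The routine part is (i): once $u_i'$ and the strong-convexity constant $\lambda$ are identified, the contraction is a direct computation. The main obstacle is (ii), because the quantity that naturally contracts for the dual algorithm is a dual/gap potential rather than the primal suboptimality, and because $\phi_j^*$ may be nonsmooth, so one cannot simply differentiate the dual objective. The delicate points are choosing the potential so that the cross terms telescope, correctly accounting for the $\frac{1}{\beta}$-strong convexity of $\phi_j^*$ to obtain the exact denominator $\beta v_j+\lambda n$, and verifying that the \emph{gap} (not merely the dual suboptimality) falls out of the same estimate, so that weak duality can be invoked to control $\E[P(w^k)-P(w^*)]$ as claimed.
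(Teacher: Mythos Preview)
Your proposal is correct and follows essentially the same route as the paper's proof, which is itself a sketch that invokes the NSync and Quartz analyses; you simply unpack those analyses (ESO descent plus $\lambda$-strong convexity / PL for part~(i), a Quartz-style potential plus weak duality for part~(ii)) rather than citing them as black boxes, and you arrive at the identical contraction factors $\rho=\min_i p_i\lambda n/(\beta u_i+\lambda n)$ and $\theta=\min_j q_j\lambda n/(\beta v_j+\lambda n)$ with $\CP=P(w^0)-P(w^*)$ and $\CD$ tied to the initial gap.
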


For the dual method a stronger guarantee can be established (see \cite{Quartz}): as soon as $k\geq L_D(\hat{S}_D,\epsilon)$, we have $\E[P(w^k)-D(\alpha^k)]\leq \epsilon$. Clearly, this stronger result implies the claim in part ii) of the above theorem.

\subsection{Average cost of a single iteration}

Let $\|\cdot\|_0$ be the number of nonzeros in a matrix/vector. It is easy to observe that the average  cost of a single iteration of  Algorithm~\ref{alg:nsync} is  \begin{equation}\label{eq:W_P}  W_P(\bX,\hat{S}_P)\eqdef \mathcal{O}\left(\E\left[ \sum_{i \in \hat{S}_P} \|\bX_{i:}\|_0\right]\right)= \mathcal{O}\left(\sum_{i=1}^d p_i \|\bX_{i:}\|_0\right),\end{equation}
and for Algorithm~\ref{alg:quartz} it is   \begin{equation}\label{eq:W_D}  W_D(\bX,\hat{S}_D)\eqdef \mathcal{O}\left(\E\left[ \sum_{j \in \hat{S}_D} \|\bX_{:j}\|_0\right]\right)= \mathcal{O}\left(\sum_{j=1}^n q_j \|\bX_{:j}\|_0\right).\end{equation}
We remark that the constant hidden in $\mathcal{O}$ may be larger for Algorithm~\ref{alg:nsync} than for Algorithm~\ref{alg:quartz}. The reason for this is that for Algorithm~\ref{alg:nsync} we compute the one-dimensional derivative $\phi_j'$ for every nonzero term in the sum, while for Algorithm~\ref{alg:quartz} we do this only once. Depending on the loss $\phi_j$, this may lead to slower iterations. For example, if $\phi_j$ isthe  logistic loss, experimentation shows that the constant is around 50. On the other hand, if $\phi_j$ is the squared loss, the constant is 1.
\subsection{Total complexity}

By combining the bounds on the number of iterations provided by Theorem~\ref{thm:primal_and_dual} with the formulas \eqref{eq:W_P} and \eqref{eq:W_D} for the cost of a single iteration we obtain the following expressions for the {\em total complexity} of  the two algorithms, where we ignore the logarithmic terms and drop the $\tilde{O}$ symbol:\begin{equation}\label{eq:T_P}  T_P = T_P(\bX,\hat{S}_P) \eqdef K_P W_P  \overset{\eqref{eq:K_P} + \eqref{eq:W_P}}{=}  \left( \max_{i \in [d]} \frac{\beta u_i + \lambda n }{p_i \lambda n }\right) \left(\sum_{i=1}^d p_i \|\bX_{i:}\|_0\right), \end{equation}  \begin{equation}\label{eq:T_D}  T_D = T_D(\bX,\hat{S}_D) \eqdef  K_D W_D  \overset{\eqref{eq:K_D} + \eqref{eq:W_D}}{=} \left( \max_{j \in [n]} \frac{\beta v_j + \lambda n }{q_j \lambda n }\right) \left(\sum_{j=1}^n q_j \|\bX_{:j}\|_0\right).\end{equation}

\section{Choosing a Sampling that Minimizes the Total Complexity}

In this section we  identify the {\em optimal sampling} in terms of the {\em total complexity.} This is different from previous results on {\em importance sampling}, which neglect to take into account the cost of the iterations \cite{NSync,Quartz,IProx-SDCA, ImportanceSrebro}. For simplicity, we shall only consider {\em serial} samplings, i.e., samplings which only pick a single coordinate at a  time. The situation is much more complicated with non-serial samplings where first importance sampling results  have only been derived recently \cite{ISM}.

\subsection{Uniform Sampling}

The simplest serial sampling is the {\em uniform sampling}: it selects every coordinate with the same probability, i.e. $p_i = 1/d, ~\forall i \in [d]$ and $q_j = 1/n, ~\forall j \in [n]$. In view of \eqref{eq:T_P}, \eqref{eq:T_D} and \eqref{eq:98s98hsd}, we get
\[T_P =\|\bX\|_0 \left( 1 + \frac{\beta}{\lambda n}\max_{i \in [d]} \|\bX_{i:}\|_2^2 \right)\] and \[T_D = \|\bX\|_0\left( 1 + \frac{\beta}{\lambda n}\max_{j \in [n]} \|\bX_{:j}\|_2^2 \right).\] 
We can now clearly see that whether $T_P\leq T_D$ or $T_P\geq T_D$ depends does not simply depend on $d$ vs $n$, but instead depends on the relative value of the quantities $\max_{i \in [d]} \|\bX_{i:}\|_2^2$  and $\max_{j \in [n]} \|\bX_{:j}\|_2^2$. Having said that, we shall not study these quantities in this paper. The reason for this is that for the cake of brevity, we shall instead focus on comparing the primal and dual RCD methods for optimal sampling which minimizes the total complexity, in which case we will obtain different quantities.

\subsection{Importance Sampling}

By {\em importance sampling} we mean the serial sampling $\hat{S}_P$ (resp.\ $\hat{S}_D$) which minimizes the bounds $K_P$ in \ref{eq:K_P} (resp.\ $K_D$ in \eqref{eq:K_D}). It can easily be seen (see also \cite{NSync}, \cite{Quartz}, \cite{IProx-SDCA}),  that importance sampling probabilities are given by \begin{equation} \label{def:importance_sampling}
p_i^* = \frac{\beta u_i + \lambda n}{\sum_{l} ( \beta u_l + \lambda n)} \qquad \mbox{and} \qquad q_j^* = \frac{\beta v_j + \lambda n}{\sum_{l} (\beta v_l + \lambda n)}.
\end{equation}  
On the other hand, one can observe that the average iteration cost of importance sampling may be larger than the average iteration cost of uniform serial sampling. Therefore, it is a natural question to ask, whether it is necessarily better. In view of \eqref{eq:T_P}, \eqref{eq:T_D} and \eqref{def:importance_sampling}, the total complexities for importance sampling are
\begin{equation} \label{eq:MMM}  T_P =\|\bX\|_0 + \frac{\beta}{\lambda n}\sum_{i=1}^d \|\bX_{i:}\|_0 \|\bX_{i:}\|_2^2 , \qquad  T_D = \|\bX\|_0 + \frac{\beta}{\lambda n}\sum_{j=1}^n \|\bX_{:j}\|_0\|\bX_{:j}\|_2^2. \end{equation}
Since a weighted average is smaller  than the maximum, the total complexity of both methods with importance sampling is always better than with uniform sampling. However, this does not mean that importance sampling is the sampling that minimizes total complexity. 

\subsection{Optimal Sampling}

The next theorem states that, in fact, importance sampling {\em does} minimize the total complexity.

\begin{theorem}\label{thm:runtime} The optimal  serial sampling (i.e., the serial sampling minimizing the total expected complexity $T_P$ (resp, $T_D$))  is the importance sampling \eqref{def:importance_sampling}.
\end{theorem}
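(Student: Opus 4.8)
The plan is to reduce the statement to a transparent optimization over the probability simplex and then exhibit a single pointwise inequality that importance sampling attains with equality. I will argue the primal claim in detail; the dual claim follows verbatim with rows of $\bX$ replaced by columns.

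First I would specialize the total complexity \eqref{eq:T_P} to serial samplings. By \eqref{eq:98s98hsd} a serial sampling has $u_i=\|\bX_{i:}\|_2^2$, so the only free quantities are the probabilities $p=(p_1,\dots,p_d)$ ranging over the simplex $\{p:p_i>0,\ \sum_i p_i=1\}$. Introducing the shorthand $a_i\eqdef\beta u_i+\lambda n>0$ and $c_i\eqdef\|\bX_{i:}\|_0\geq 0$, the objective becomes
\[
T_P(p)=\frac{1}{\lambda n}\left(\max_{i\in[d]}\frac{a_i}{p_i}\right)\left(\sum_{i=1}^d p_i c_i\right).
\]
The task is to minimize this product of a maximum and a linear functional over $p$.

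The heart of the argument is a coordinatewise estimate linking the two factors. Writing $M\eqdef\max_l a_l/p_l$, we have $M p_i\geq a_i$ for every $i$, and since $c_i\geq 0$ this gives $M p_i c_i\geq a_i c_i$ for each coordinate; summing over $i\in[d]$ yields
\[
\left(\max_l\frac{a_l}{p_l}\right)\left(\sum_i p_i c_i\right)=\sum_i M p_i c_i\geq\sum_i a_i c_i .
\]
Hence $T_P(p)\geq\frac{1}{\lambda n}\sum_i a_i c_i$ for every feasible $p$, a lower bound independent of $p$. It then remains to check that importance sampling \eqref{def:importance_sampling} attains it: at $p_i^*=a_i/\sum_l a_l$ we have $a_i/p_i^*=\sum_l a_l$ for all $i$, so $M=\sum_l a_l$ and $\sum_i p_i^* c_i=(\sum_i a_i c_i)/(\sum_l a_l)$; the two factors multiply to $\sum_i a_i c_i$, matching the bound exactly (and recovering the expression in \eqref{eq:MMM}). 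Therefore importance sampling minimizes $T_P$, and the identical computation with $v_j=\|\bX_{:j}\|_2^2$ and $q_j$ in place of $u_i,p_i$ settles $T_D$.

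I do not expect a serious obstacle, since the structure is favourable: the objective is a product of a max-term and a nonnegative linear term, and the single inequality $M p_i\geq a_i$ is exactly what couples them. The only point deserving care is the equality analysis, namely verifying that importance sampling is where the bound is tight. This is immediate once one notices that importance sampling is precisely the configuration that equalizes all ratios $a_i/p_i$, which is the unique way to make the pointwise inequality tight simultaneously for every coordinate carrying positive weight $c_i$.
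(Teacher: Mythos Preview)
Your argument is correct and, in fact, slightly cleaner than the paper's. The paper proceeds differently: it first exploits the homogeneity of $T_P$ in $p$ to drop the simplex constraint, and then runs an exchange argument---if two ratios $s_j/p_j < s_k/p_k$ differ, one can shrink $p_j$ a little without affecting the max while strictly decreasing the linear term, so at optimality all ratios $s_i/p_i$ must coincide, which pins down importance sampling. Your route instead establishes the explicit lower bound $T_P(p)\geq \tfrac{1}{\lambda n}\sum_i a_i c_i$ via the single pointwise inequality $Mp_i\geq a_i$ and then verifies that importance sampling attains it. The advantage of your approach is that it immediately yields the optimal value (matching \eqref{eq:MMM}) and avoids any limiting or variational reasoning; the paper's perturbation argument, on the other hand, more directly characterizes \emph{all} optimizers as those with equalized ratios, though it tacitly uses $\|\bX_{j:}\|_0>0$ for the strict decrease step.
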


\section{The Face-Off}

In this section we investigate the two quantities in \eqref{eq:MMM}, $T_P$  and $T_D$, measuring the total complexity of the two methods   as functions of the data $\bX$. Clearly, it is enough to focus on the quantities
\begin{equation} \label{eq:two_quantities}   \CP(\bX) \eqdef \sum_{i=1}^d \|\bX_{i:}\|_0 \| \bX_{i:}\|^2\qquad \text{and} \qquad \CD(\bX) \eqdef \sum_{j=1}^n \|\bX_{:j}\|_0 \| \bX_{:j}\|^2.\end{equation}
We shall ask questions such as: when is $\CP(\bX)$ larger/smaller than $\CD(\bX)$, and by how much. In this regard, it is useful to note that  $\CP(\bX) = \CD(\bX^\top)$.  Our first result gives tight lower and upper bounds on their ratio.

\begin{theorem}\label{thm:Bound_Any_Data} For any $\bX\in \R^{d \times n}$ with no zero rows or columns,  we have the bounds $\|\bX\|_F^2 \leq \CP(\bX) \leq n\|\bX\|_F^2$ and $\|\bX\|_F^2\leq \CD(\bX) \leq d\|\bX\|_F^2$. It follows that $ 1/d\leq \CP(\bX) / \CD(\bX) \leq n.$ Moreover, all these bounds are tight.
\end{theorem}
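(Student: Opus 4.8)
The plan is to prove all four scalar bounds by the same elementary sandwiching of the sparsity factors, then read off the ratio bound, and finally certify tightness with a handful of explicit extremal matrices. The only genuine content is the observation that the weights $\|\bX_{i:}\|_0$ and $\|\bX_{:j}\|_0$ are trapped between $1$ and the ambient dimension.

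First I would record the two ways of splitting the Frobenius norm, $\|\bX\|_F^2 = \sum_{i=1}^d \|\bX_{i:}\|^2 = \sum_{j=1}^n \|\bX_{:j}\|^2$. The whole argument rests on controlling the integer weights appearing in the definitions \eqref{eq:two_quantities}. Since $\bX$ has no zero row, each row $\bX_{i:}\in\R^n$ satisfies $1\le \|\bX_{i:}\|_0\le n$; symmetrically, since $\bX$ has no zero column, each column $\bX_{:j}\in\R^d$ satisfies $1\le\|\bX_{:j}\|_0\le d$. Substituting these two-sided bounds term by term into $\CP(\bX)=\sum_i \|\bX_{i:}\|_0\|\bX_{i:}\|^2$ (replacing the weight first by $1$, then by $n$) and summing gives $\|\bX\|_F^2\le \CP(\bX)\le n\|\bX\|_F^2$; the identical computation on columns gives $\|\bX\|_F^2\le\CD(\bX)\le d\|\bX\|_F^2$. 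Combining the lower bound for $\CP$ with the upper bound for $\CD$, and then the opposite pairing, yields $1/d\le \CP(\bX)/\CD(\bX)\le n$.

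For tightness I would exhibit saturating matrices. The lower bounds $\CP=\CD=\|\bX\|_F^2$ are attained precisely when every weight equals $1$, which the identity matrix (for $d=n$) realizes, having exactly one nonzero per row and per column. The upper bounds $\CP=n\|\bX\|_F^2$ and $\CD=d\|\bX\|_F^2$ are attained by any fully dense $\bX$, where every row-weight is $n$ and every column-weight is $d$. For the ratio, a single-column matrix ($n=1$) forces $\|\bX_{i:}\|_0=1$ for all $i$ while $\|\bX_{:1}\|_0=d$, so $\CP(\bX)/\CD(\bX)=\|\bX\|_F^2/(d\|\bX\|_F^2)=1/d$; symmetrically a single-row matrix ($d=1$) gives the ratio $n$. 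Each example visibly respects the no-zero-rows-or-columns hypothesis.

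The calculations are routine; the only step deserving care is tightness. I would have to note that the two ratio extremes are realized only by the degenerate shapes $n=1$ and $d=1$, and are \emph{not} simultaneously compatible with the scalar extremes: for instance a dense matrix achieves both scalar upper bounds but yields only the intermediate ratio $n/d$, since making every row full also makes every column full. Thus the ratio bounds $1/d$ and $n$ are tight as universal bounds expressed through $d$ and $n$, even though no single matrix makes $\CP$ extreme in one direction and $\CD$ extreme in the other at the same time.
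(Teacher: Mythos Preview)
Your derivation of the four scalar bounds and of the ratio bound is correct and identical to the paper's: both arguments simply use $1\le\|\bX_{i:}\|_0\le n$ and $1\le\|\bX_{:j}\|_0\le d$.

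The tightness part, however, is where your proposal falls short of what the theorem asserts. The statement is for matrices of a \emph{fixed} shape $d\times n$, and ``tight'' here means that for every such pair $(d,n)$ the bounds $1/d$ and $n$ are the exact infimum and supremum of the ratio $\CP(\bX)/\CD(\bX)$ over admissible $\bX\in\R^{d\times n}$. Your witnesses only cover degenerate shapes: the ratio $1/d$ is hit only when $n=1$, the ratio $n$ only when $d=1$, and your identity-matrix example for the scalar lower bounds needs $d=n$. For a generic pair, say $d=7$, $n=3$, you have not shown that the ratio can be pushed arbitrarily close to $1/7$ or to $3$; you have only shown that the functional form $1/d$ and $n$ cannot be replaced by a uniformly better one. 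You flag this yourself in the last paragraph, but that weaker reading is not what the theorem claims.

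The paper closes this gap with a one-parameter family that works for every fixed $(d,n)$: take $\bX(a,b,c)\in\R^{d\times n}$ whose only nonzero entries lie in the first row and first column, with $\bX_{11}=c$, $\bX_{1j}=b$ for $j\ge 2$, and $\bX_{i1}=a$ for $i\ge 2$. A direct count gives $\CP=(d-1)a^2+n(n-1)b^2+nc^2$ and $\CD=d(d-1)a^2+(n-1)b^2+dc^2$, so sending $b,c\to 0$ drives the ratio to $1/d$ and sending $a,c\to 0$ drives it to $n$, while all rows and columns remain nonzero throughout. This limiting construction is the missing ingredient in your argument.
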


Since $\CP(\bX)$ (resp. $\CD(\bX)$) can dominate the expression \eqref{eq:T_P} (resp.\ \eqref{eq:T_D}) for total complexity, it follows that, depending on the data matrix $\bX$, {\em the primal method can be up to $d$ times faster than the dual method, and up to $n$ times slower than the dual method.}

\subsection{Random  Data and Dense Data}

Assume now that the entries of $\bX$ are chosen in an i.i.d.\ manner from some distribution with mean $\mu$ and variance $\sigma^2$. While this is not a realistic scenario, it will help us build intuition about what we can expect the quantities $\CP(\bX)$ and $\CD(\bX)$ to look like. A simple calculation reveals that $\E[\CP(\bX)] = dn\sigma^2 + dn^2\mu^2$, and $\E[\CD(\bX)] = dn\sigma^2 + nd^2\mu^2$. Hence, \[\E[\CP(\bX)]\leq \E[\CD(\bX)]\] precisely when $n\leq d$, which means that  the primal method is better when $n < d$ and the dual method is better when $n>d$. 

If $\bX$ is a dense deterministic matrix ($\bX_{ij}\neq 0$ for all $i,j$), then $\CP(\bX) = n \|\bX\|_F^2$ and $\CD(\bX)=d \|\bX\|_F^2$, and we reach the same conclusion as for random data: everything boils down to $d$ vs $n$.

\subsection{Binary Data} \label{sec:binary}

In this part we identify a class of data matrices for which one can have $\CP\leq \CD$ even if $d\ll n$. This class is by no means exhaustive, and serves as an example which we use to illustrate the phenomenon.

Let $\mathbb{B}^{d\times n}$ denote the set of $d\times n$ matrices $\bX$ with (signed) binary elements, i.e., with $\bX_{ij} \in \{-1,0,1\}$ for all $i,j$. For $\bX \in \mathbb{B}^{d\times n}$, the expressions in \eqref{eq:two_quantities} can be also written in the form $\CP(\bX) = \sum_{i=1}^d \|\bX_{i:}\|_0^2$ and $\CD(\bX) = \sum_{j=1}^n \|\bX_{:j}\|_0^2$.
By $\mathbb{B}_{\neq 0}^{d\times n}$ we denote the set of all matrices in $\mathbb{B}^{d\times n}$ with  nonzero columns and rows.

For positive integers $a,b$ we write $\bar{a}_b \eqdef b\left\lfloor\frac{a}{b}\right\rfloor$ (i.e., $a$ rounded down to the closest multiple of $b$). Further, we write \[R(\alpha,d,n) \eqdef U(\alpha,d,n)/ L(\alpha,n),\]  where 
\[L(\alpha,n) \eqdef \frac{1}{n} (\bar{\alpha}_n^2 + (\alpha - \bar{\alpha}_n)(2\bar{\alpha}_n + n))\] 
 and  \[U(\alpha,d,n) \eqdef (d + 1)\overline{(\alpha - n)}_{d-1} + n - 1 + (\alpha - n + 1 - \overline{(\alpha - n)}_{d-1})^2.\]

The following is a refinement  of Theorem~\ref{thm:Bound_Any_Data} for binary matrices of fixed cardinality $\alpha$.

\begin{theorem} \label{lem:bound_ratio}
	For all $\bX \in \mathbb{B}^{d\times n}_{\neq 0}$ with $\alpha = \|\bX\|_0$ we have the bounds  $1/R(\alpha,n,d) \leq \CP(\bX) / \CD(\bX) \leq R(\alpha, d, n)$. Moreover, these bounds are tight.
\end{theorem}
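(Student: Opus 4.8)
The plan is to reduce the claim to two decoupled extremal problems over integer degree sequences and then to exhibit a single matrix attaining both extrema simultaneously, which establishes the bound together with its tightness. Since the entries of $\bX$ lie in $\{-1,0,1\}$, only the support matters; writing $r_i \eqdef \|\bX_{i:}\|_0 \in \{1,\dots,n\}$ and $c_j \eqdef \|\bX_{:j}\|_0 \in \{1,\dots,d\}$, we have $\CP(\bX)=\sum_{i=1}^d r_i^2$ and $\CD(\bX)=\sum_{j=1}^n c_j^2$, where $\sum_i r_i = \sum_j c_j = \alpha$ and every $r_i, c_j \ge 1$. Because $\CP(\bX)=\CD(\bX^\top)$, it suffices to prove the upper bound $\CP(\bX)/\CD(\bX) \le R(\alpha,d,n)$ for all $\bX \in \BB$: applying it to $\bX^\top \in \mathbb{B}^{n\times d}_{\neq 0}$, which interchanges the roles of $d$ and $n$, gives $\CD(\bX)/\CP(\bX) \le R(\alpha,n,d)$, i.e.\ the stated lower bound $\CP(\bX)/\CD(\bX)\ge 1/R(\alpha,n,d)$.

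For the upper bound I would bound the numerator from above and the denominator from below, separately. The denominator $\sum_j c_j^2$ is minimized by the most balanced allocation of $\alpha$ into $n$ positive parts: a one-line exchange argument (whenever $c_j \ge c_{j'}+2$, the swap $(c_j,c_{j'}) \mapsto (c_j-1,c_{j'}+1)$ strictly decreases the sum of squares) forces the minimizer to have all parts equal to $\lfloor \alpha/n\rfloor$ or $\lceil \alpha/n\rceil$, and summing reproduces exactly $L(\alpha,n)$. The numerator $\sum_i r_i^2$ is maximized by the most concentrated admissible allocation of $\alpha$ into at most-$n$-valued parts: the reverse exchange (whenever $1 < r_i \le r_{i'} < n$, the swap $(r_i,r_{i'}) \mapsto (r_i-1,r_{i'}+1)$ strictly increases the sum of squares) shows an optimal sequence consists of several full parts, a single intermediate part, and the remaining parts equal to $1$; counting the number of full parts and tracking the remainder reproduces the closed form $U$ that defines the numerator of $R$. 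Dividing the two closed forms gives $\CP(\bX)/\CD(\bX) \le R(\alpha,d,n)$, so the bound itself is the easy half.

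The crux---and the step I expect to be most delicate---is tightness, where I must build one matrix in $\BB$ whose row degrees realize the concentrated (numerator-optimal) sequence while its column degrees realize the balanced (denominator-optimal) sequence at the same time. My construction takes the $a$ full rows to be completely dense, so each contributes one nonzero to every column, and then distributes the remaining $\alpha - an$ nonzeros---carried by the single intermediate row and the unit rows---across the $n$ columns in round-robin fashion, as evenly as possible. Since the full rows and the remaining rows occupy disjoint cells, no cell is assigned twice, so the pattern is a legitimate $0/1$ matrix; the even spreading of the residual nonzeros keeps the final column degrees within $1$ of one another, hence balanced; and every row and every column receives at least one nonzero (each column ends with at least $\lfloor \alpha/n\rfloor \ge 1$, using $\alpha\ge n$), so the matrix indeed lies in $\BB$. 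The residual work is purely combinatorial bookkeeping: verifying that the round-robin placement respects each row's degree cap and lands exactly on the two optimal sequences, which is where the rounding terms $\bar{\alpha}_n$ and $\overline{(\alpha-n)}_{d-1}$ appearing in $L$ and $U$ must be matched carefully across the various residue classes. Once this is checked, the ratio of the constructed matrix equals $U/L = R(\alpha,d,n)$, proving tightness of the upper bound; tightness of the lower bound then follows by transposing this extremal matrix.
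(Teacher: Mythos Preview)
Your proposal is correct and, for the inequality itself, mirrors the paper exactly: both reduce $\CP,\CD$ to sums of squares of the row/column degree sequences, use the same pair of exchange arguments to identify the balanced sequence as the minimizer of $\CD$ and the concentrated sequence as the maximizer of $\CP$, and then divide the resulting closed forms $U$ and $L$ to obtain $R$; the lower bound follows by transposition in both.

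The one genuine difference is in the tightness argument. You build an explicit extremal matrix (full rows plus round-robin placement of the residual nonzeros) and then have to verify, case by case in the residue classes, that the resulting column degrees are exactly the balanced sequence while the row degrees stay on the concentrated sequence. The paper sidesteps this bookkeeping entirely: it notices that the very switch used in the $\min\CD$ exchange argument --- moving a nonzero within a single row from an over-full column to an under-full one --- leaves every row degree, and hence $\CP$, invariant. So starting from \emph{any} matrix attaining $\max\CP$ and repeatedly applying these $\CP$-preserving switches drives $\CD$ down to its minimum while keeping $\CP$ at its maximum, yielding a simultaneous extremizer without ever writing one down. Your construction buys concreteness (you actually exhibit the extremal pattern), whereas the paper's argument buys brevity and avoids the rounding bookkeeping you flagged as delicate.
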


The above theorem follows from Lemma~\ref{lem:general_binary_maxmin}, which we formulate and prove in the Appendix. This lemma establishes formulas for the minimum and maximum of $\CD$ and $\CP$, subject to the constraint $\|\bX\|_0=\alpha$, in terms of the functions $L$ and $U$. Further, as we show in Lemma~\ref{lem:bound_Rabc} in the Appendix, if $d \geq n$ and $\alpha \geq n^2 + 3n$, then $R(\alpha,d,n) \leq 1$. Likewise, if $n \geq d$ and $\alpha \geq d^2 + 3d$, then $R(\alpha,n,d) \leq 1$. Combined with Theorem~\ref{lem:bound_ratio}, this has an interesting consequence, spelled out in the next theorem and its corollary.

\begin{theorem} \label{thm:98y9s8s} Let $\bX \in \mathbb{B}^{d\times n}_{\neq 0}$.
If $d \geq n$ and   $\| \bX \|_0 \geq n^2 + 3n$, then $\CP(\bX) \leq \CD(\bX)$. By symmetry, if $n \geq d$ and $\| \bX\|_0 \geq d^2 + 3d$, then $\CD(\bX) \leq \CP(\bX)$. 
\end{theorem}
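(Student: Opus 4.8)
The plan is to derive Theorem~\ref{thm:98y9s8s} directly from the two quantitative statements that precede it: the tight ratio bounds of Theorem~\ref{lem:bound_ratio} and the inequality $R\le 1$ furnished by Lemma~\ref{lem:bound_Rabc}. All of the analytic content already lives in those two results, so the proof itself only has to chain them together and, for the second half, exploit the transpose symmetry $\CP(\bX)=\CD(\bX^\top)$. I would first record the trivial observation that makes the ratios well defined: since $\bX\in\mathbb{B}^{d\times n}_{\neq 0}$ has no zero column, every $\|\bX_{:j}\|_0\ge 1$, whence $\CD(\bX)=\sum_{j}\|\bX_{:j}\|_0^2\ge n>0$ (and symmetrically $\CP(\bX)\ge d>0$), so dividing by $\CD(\bX)$ and concluding $\CP\le\CD$ from $\CP/\CD\le 1$ is legitimate.

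For the first case, assume $d\ge n$ and $\alpha\eqdef\|\bX\|_0\ge n^2+3n$. Theorem~\ref{lem:bound_ratio} gives $\CP(\bX)/\CD(\bX)\le R(\alpha,d,n)$, and the hypotheses $d\ge n$ and $\alpha\ge n^2+3n$ are precisely those under which Lemma~\ref{lem:bound_Rabc} yields $R(\alpha,d,n)\le 1$. Composing the two inequalities gives $\CP(\bX)\le\CD(\bX)$, which is the claim.

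For the second case, $n\ge d$ and $\alpha\ge d^2+3d$, I would take whichever of two equivalent routes reads more cleanly. The direct route uses the \emph{lower} bound in Theorem~\ref{lem:bound_ratio}, namely $1/R(\alpha,n,d)\le\CP(\bX)/\CD(\bX)$, together with the second assertion of Lemma~\ref{lem:bound_Rabc}, which under $n\ge d$ and $\alpha\ge d^2+3d$ gives $R(\alpha,n,d)\le 1$, hence $1/R(\alpha,n,d)\ge 1$ and therefore $\CD(\bX)\le\CP(\bX)$. The symmetric route instead applies the already-established first case to the transpose $\bY=\bX^\top\in\mathbb{B}^{n\times d}_{\neq 0}$: reading $\bY$ as a matrix with $n$ rows and $d$ columns, its hypotheses become exactly $n\ge d$ and $\|\bY\|_0=\alpha\ge d^2+3d$, so the first case yields $\CP(\bY)\le\CD(\bY)$, and substituting $\CP(\bY)=\CD(\bX)$ and $\CD(\bY)=\CP(\bX)$ gives $\CD(\bX)\le\CP(\bX)$. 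Either way the ``by symmetry'' clause of the statement is made rigorous.

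The genuine obstacle is not in this theorem but one layer below, in Lemma~\ref{lem:bound_Rabc}, which asserts that $R$ drops below $1$ once $\alpha$ crosses the threshold $n^2+3n$; the proof of Theorem~\ref{thm:98y9s8s} proper is the two-line composition above. Heuristically I expect the threshold to arise as follows: the most $\CP$-heavy binary matrix concentrates nonzeros into rows that are capped at $n$, so $\CP$ cannot exceed roughly $\alpha n$, whereas the most $\CD$-light matrix spreads the nonzeros as evenly as possible over the $n$ columns, forcing $\CD\approx\alpha^2/n$; their ratio is about $n^2/\alpha$, which falls below $1$ precisely once $\alpha$ exceeds $n^2$, and the exact cut-off $n^2+3n$ is where the integer-rounding remainder terms $(\alpha-\bar\alpha_n)(2\bar\alpha_n+n)$ in $L$ and $(\alpha-n+1-\overline{(\alpha-n)}_{d-1})^2$ in $U$ are fully absorbed. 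Turning this heuristic into the clean inequality $R\le 1$ uniformly over all residues of $\alpha$ modulo $n$ and modulo $d-1$ is the step I would budget the most care for, but it is quarantined inside Lemma~\ref{lem:bound_Rabc} and does not resurface here.
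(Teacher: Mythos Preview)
Your proposal is correct and matches the paper's own argument: the paper explicitly states in the main text that Theorem~\ref{thm:98y9s8s} is obtained by combining Theorem~\ref{lem:bound_ratio} with Lemma~\ref{lem:bound_Rabc}, and its appendix ``proof'' of the theorem consists solely of proving that lemma. Your chaining of the ratio bound with $R\le 1$ is exactly this, and your added care (positivity of $\CD$, the two equivalent routes for the symmetric case) only makes the argument cleaner.
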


This result says that for binary data, and $d\geq n$, the primal method is better than the dual method even for non-dense data, as long as the  the data is ``dense enough''.	Observe that as long as $d \geq n^2 + 3n$, 	all matrices $\bX \in \mathbb{B}^{d\times n}_{\neq 0}$ satisfy $\|\bX\|_0 \geq d \geq n^2 + 3n \geq n$. This leads to the following corollary.
	
\begin{corollary}
If $d \geq n^2 + 3n$, then for all $\bX \in \mathbb{B}^{d\times n}_{\neq 0}$ we have $\CP(\bX) \leq \CD(\bX)$. By symmetry, if $n \geq d^2 + 3d$, then for all  $\bX \in \mathbb{B}^{d\times n}_{\neq 0}$ we have $\CD(\bX) \leq \CP(\bX)$.\end{corollary}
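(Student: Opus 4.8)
The plan is to obtain the corollary as an immediate specialization of Theorem~\ref{thm:98y9s8s}, the only real work being to check that the single hypothesis $d \geq n^2 + 3n$ forces \emph{both} conditions required by that theorem to hold uniformly over all admissible matrices, not merely for some particular $\bX$.

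First I would verify the dimension condition. Since $n \geq 1$ we have $n^2 + 2n \geq 0$, hence $n^2 + 3n \geq n$; combined with the assumption $d \geq n^2 + 3n$ this yields $d \geq n$, which is precisely the first hypothesis of Theorem~\ref{thm:98y9s8s}. Next I would establish the cardinality condition $\|\bX\|_0 \geq n^2 + 3n$ uniformly. The key (elementary) observation is that membership in $\mathbb{B}^{d\times n}_{\neq 0}$ forbids zero rows, so each of the $d$ rows of $\bX$ contributes at least one nonzero entry, giving $\|\bX\|_0 \geq d$. Chaining this with $d \geq n^2 + 3n$ produces $\|\bX\|_0 \geq d \geq n^2 + 3n$, exactly the second hypothesis. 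With both hypotheses in hand, Theorem~\ref{thm:98y9s8s} delivers $\CP(\bX) \leq \CD(\bX)$, and since $\bX \in \mathbb{B}^{d\times n}_{\neq 0}$ was arbitrary, the first claim follows.

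Finally, the symmetric claim would follow by applying the first claim to the transpose $\bX^\top \in \mathbb{B}^{n\times d}_{\neq 0}$, whose dimensions swap the roles of $d$ and $n$: under the hypothesis $n \geq d^2 + 3d$, the first claim gives $\CP(\bX^\top) \leq \CD(\bX^\top)$. Invoking the identity $\CP(\bX) = \CD(\bX^\top)$ noted earlier (which, applied to $\bX^\top$, also gives $\CD(\bX) = \CP(\bX^\top)$) then converts this into $\CD(\bX) \leq \CP(\bX)$, as desired.

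There is essentially no obstacle here: the entire substance resides in Theorem~\ref{thm:98y9s8s}, and the corollary is a short deduction whose only ingredient beyond that theorem is the fact that the absence of zero rows forces $\|\bX\|_0 \geq d$. Accordingly I would present this as a brief remark-style proof rather than a fresh computation, being careful only to state the chain $\|\bX\|_0 \geq d \geq n^2 + 3n \geq n$ explicitly so that the reader sees both required inequalities discharged at once.
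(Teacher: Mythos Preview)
Your proposal is correct and follows exactly the same route as the paper: the paper notes just before the corollary that $\|\bX\|_0 \geq d \geq n^2 + 3n \geq n$ for all $\bX \in \mathbb{B}^{d\times n}_{\neq 0}$, so that both hypotheses of Theorem~\ref{thm:98y9s8s} are satisfied. Your explicit verification of $d \geq n$ and the transpose argument for the symmetric claim only make this short deduction more self-contained.
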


In  words, the corollary states that for binary data where the number of features ($d$) is large enough in comparison with the number of examples ($n$), the primal method will be always better. On the other hand, if $n$ is large enough, the dual method will be always better. This behavior can be observed in Figure~\ref{fig:alpha}. For large enough $d$, all the values $R(\alpha,d,n)$ are below 1.


\begin{figure}[H]
	\centering
	\begin{subfigure}{.30\linewidth}
		\label{fig:plotR1000}
		\centering
		\includegraphics[width = \linewidth]{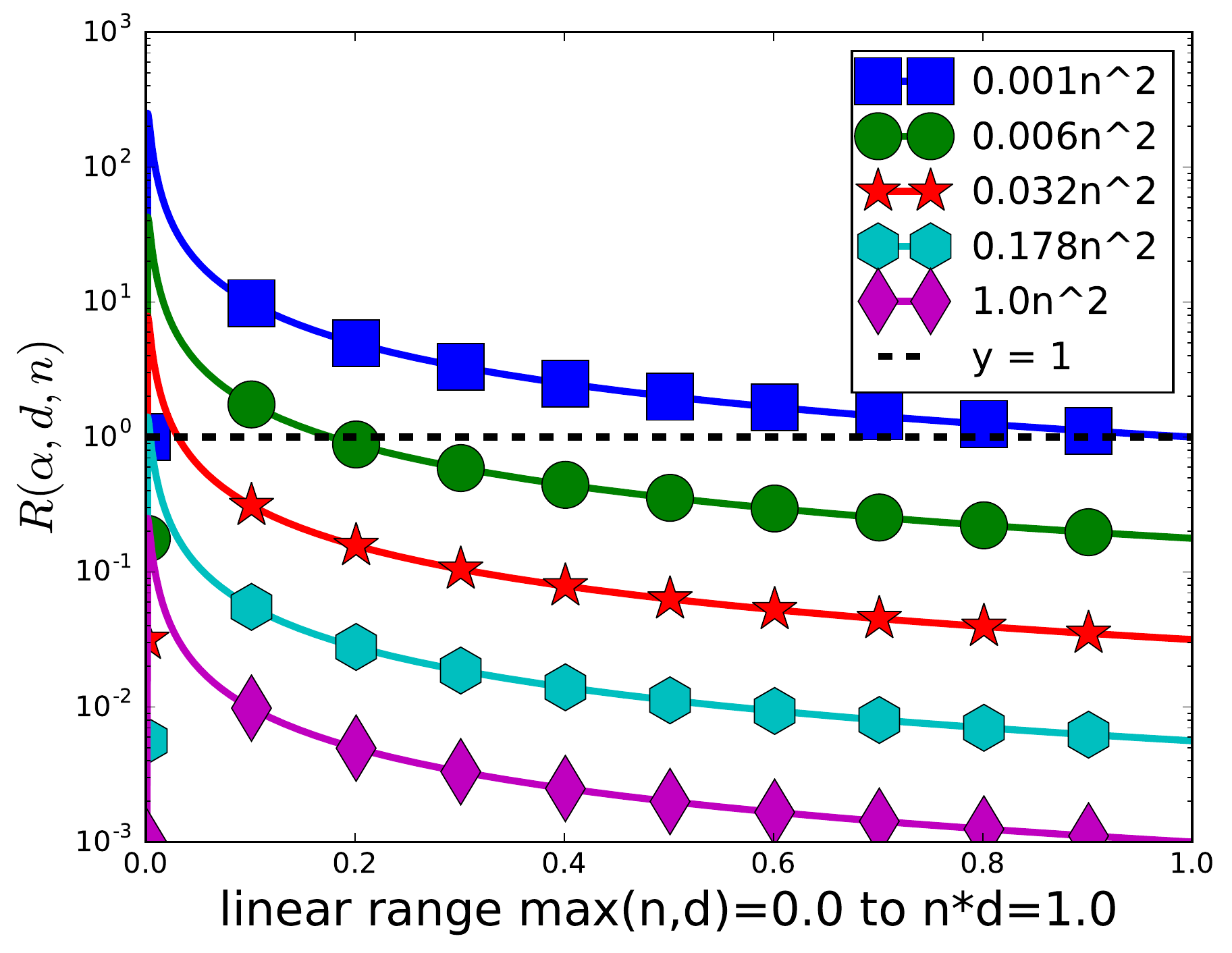}
	\end{subfigure}
	\caption{The value $R(\alpha,d,n)$ plotted for $n=10^3$, $n\leq d\leq n^2$ and $\max\{d,n\} \leq \alpha \leq nd$.}
	\label{fig:alpha}
\end{figure}

\section{Experiments}

We conducted experiments on both real and synthetic data. The problem we were interested in is a standard logistic regression with an L2-regularizer, i.e., \[P(w) = \frac{1}{n}\sum_{j=1}^n \log(1 + \exp(-y_j \langle \bX_{:j}, w \rangle)) + \frac{\lambda}{2}\|w\|_2^2.\] In all our experiments we used $\lambda = 1/n$ and we normalized all the entries of $\bX$ by the average column norm.
 Note that for logistic loss there is no closed form solution for $\Delta_j^k$ in Algorithm~\ref{alg:quartz}. Therefore we use a variant of Algorithm~\ref{alg:quartz} where $\Delta_j^k = \eta (\phi_j'(\langle \bX_{:j}, w \rangle) + \alpha_j^k)$ with the step size $\eta$ defined as $\eta = \min_{j \in [n]}(q_j \lambda n)/(\beta v_j + \lambda n)$. This variant has the same convergence rate guarantees as Algorithm~\ref{alg:quartz} and does not require exact minimization. Details can be found in \cite{Quartz}.

We plot the training error against the number of passes through the data. The number of passes is calculated according to the number of visited nonzero entries in the matrix $\bX$. One pass means that we look at $\|\bX\|_0$ nonzero entries of $\bX$, but not necessarily all of them. We look at the problems from the perspective of the primal approach. The same could be done symmetrically for the dual approach.

\subsection{General Data}

We look at the matrices which give worst-case bounds for general matrices (Theorem~\ref{thm:Bound_Any_Data}) and their empirical properties for different choices of $d$ and $n$. The corresponding figures are Figure~\ref{subfig:general1} and \ref{subfig:general2}. For a square dataset, we can observe a large speed-up. For large $n$ we can observe, that the theory holds and the primal method is still faster, but because of numerical issues (we need very small and very large numbers in matrix) and the fact that the optimal value is very close to an "initial guess" of the algorithm, the difference in speed is more difficult to observe.

\begin{figure}
\centering
	\begin{subfigure}{\mywidth \textwidth} 
		\includegraphics[width = \textwidth]{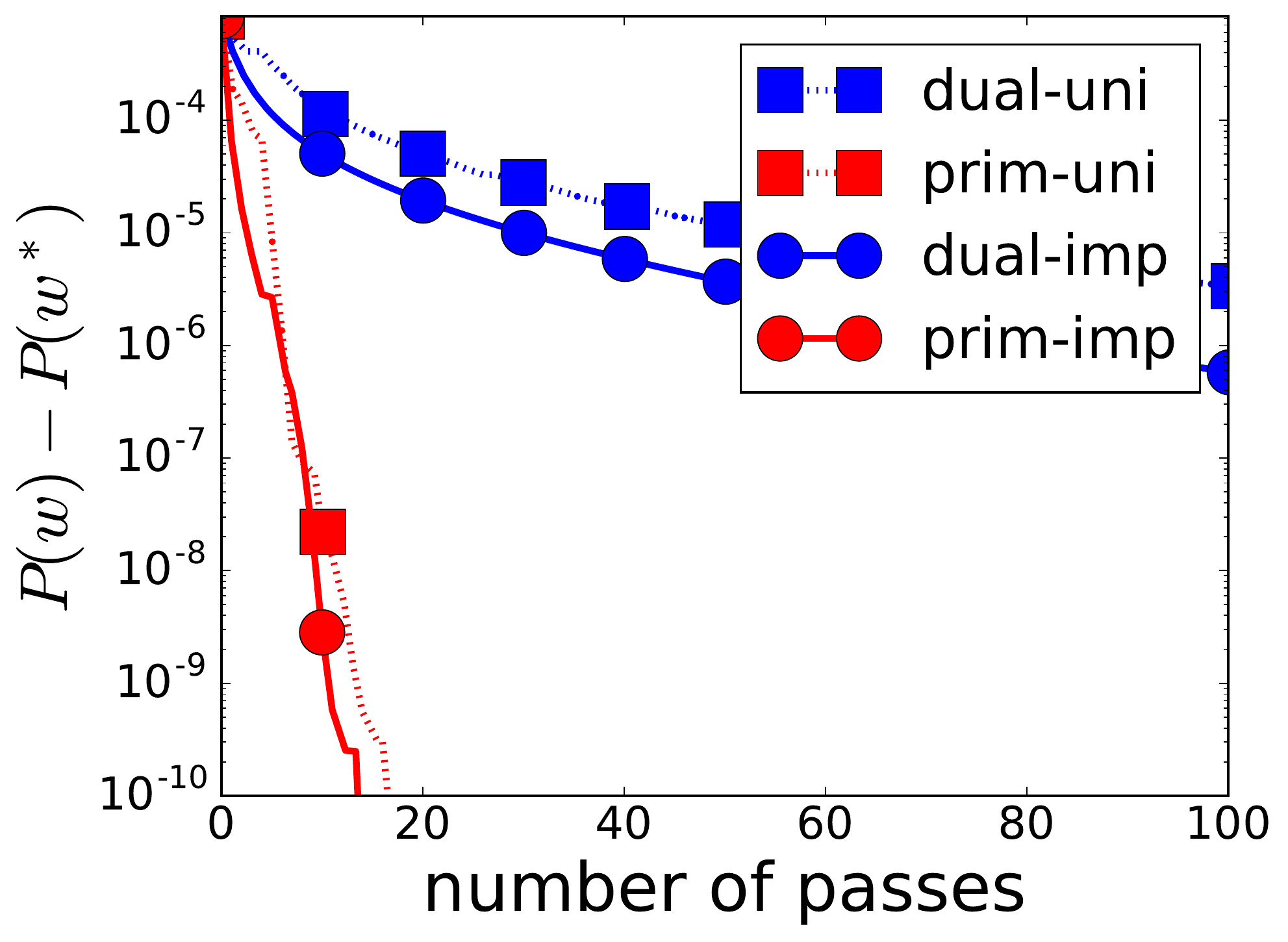}
		\caption{$1,000 \times 1,000$}
		\label{subfig:general1}
	\end{subfigure}
	\begin{subfigure}{\mywidth \textwidth} 
		\includegraphics[width = \textwidth]{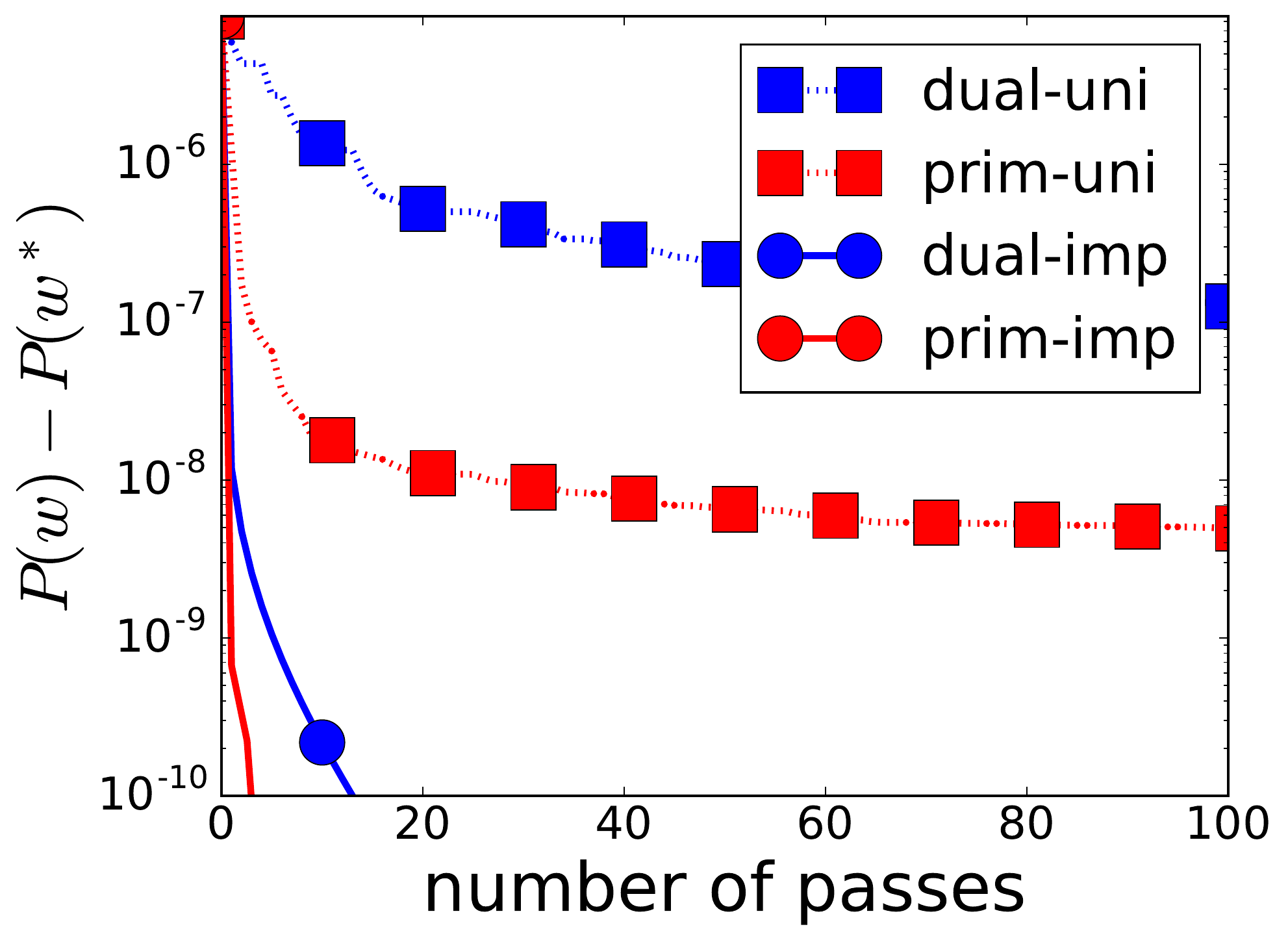}
		\caption{$100 \times 100,000$}
		\label{subfig:general2}
	\end{subfigure}
	\begin{subfigure}{\mywidth \textwidth} 
		\includegraphics[width = \textwidth]{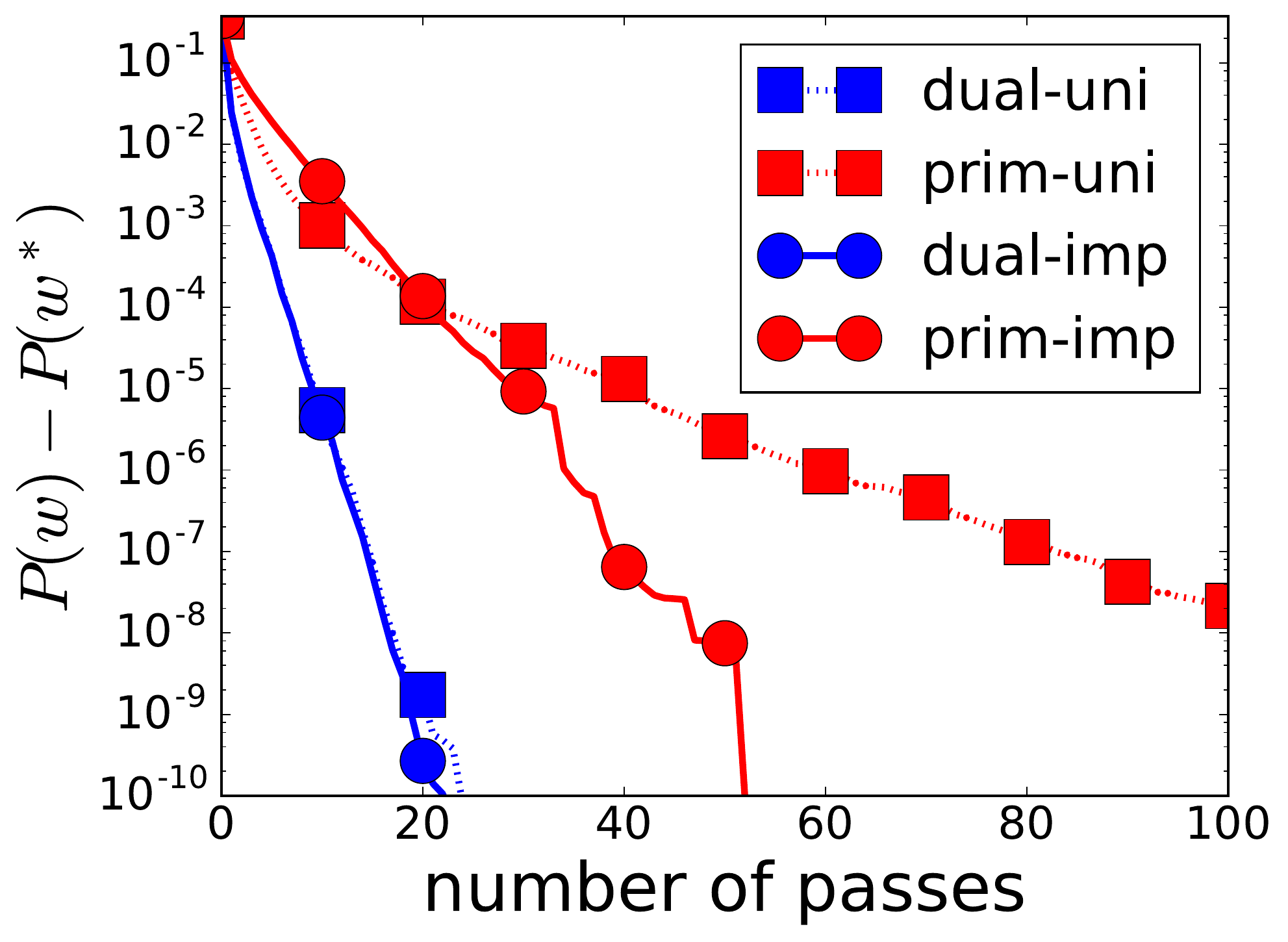}
		\caption{news dataset}
		\label{subfig:news}
	\end{subfigure}
	\begin{subfigure}{\mywidth \textwidth} 
		\includegraphics[width = \textwidth]{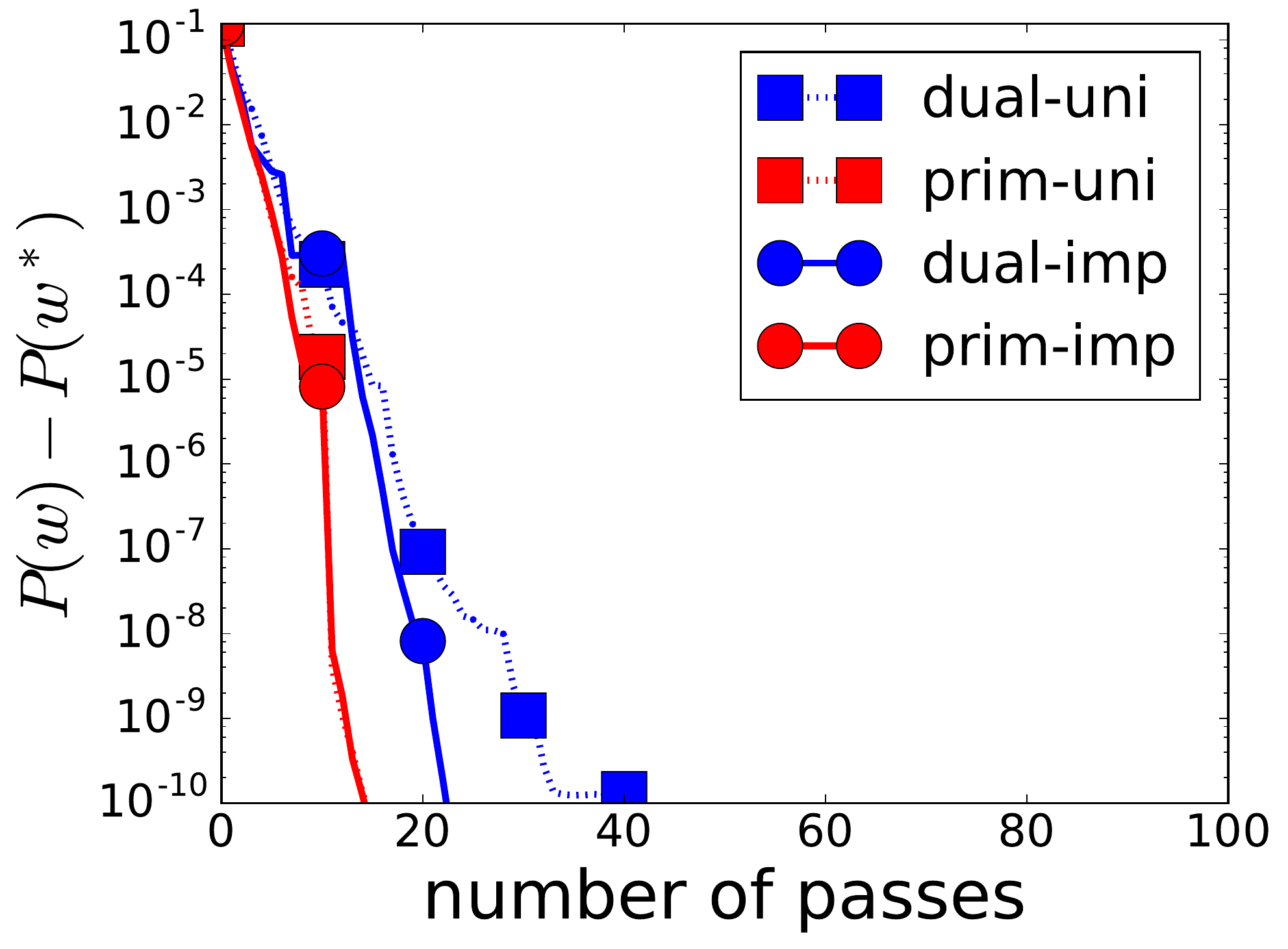}
		\caption{leukemia dataset}
		\label{subfig:leukemia}
	\end{subfigure}
	\caption{Testing the worst case for general matrices and real datasets}
	\label{fig:general_real} 
\end{figure}

\subsection{Synthetic Binary Data}

 We looked at matrices with all entries in  $\{a, -a, 0\}$ for some $a \neq 0$. We fixed the number of features to be $d = 100$ and we varied the number of examples $n$ and the sparsity level $\alpha=\|\bX\|_0$. For each triplet $[d,n,\alpha]$ we produced the worst-case matrix for dual RCD according to the developed theory. The results are in Figure~\ref*{tab:table_experiments}.

\begin{figure}[]
	\begin{tabular}{m{0.02\textwidth}   >{\centering\arraybackslash} m{0.28\textwidth}
			>{\centering\arraybackslash} m{0.28\textwidth}  >{\centering\arraybackslash} m{0.28\textwidth} }
		& nnz $ \sim 1\% $ &   nnz $ \sim 10\% $ & nnz $ 100\% $ \\ 
		\rotatebox[origin=c]{90}{$ n = 100 $} & \includegraphics[width=\mywidth\textwidth]{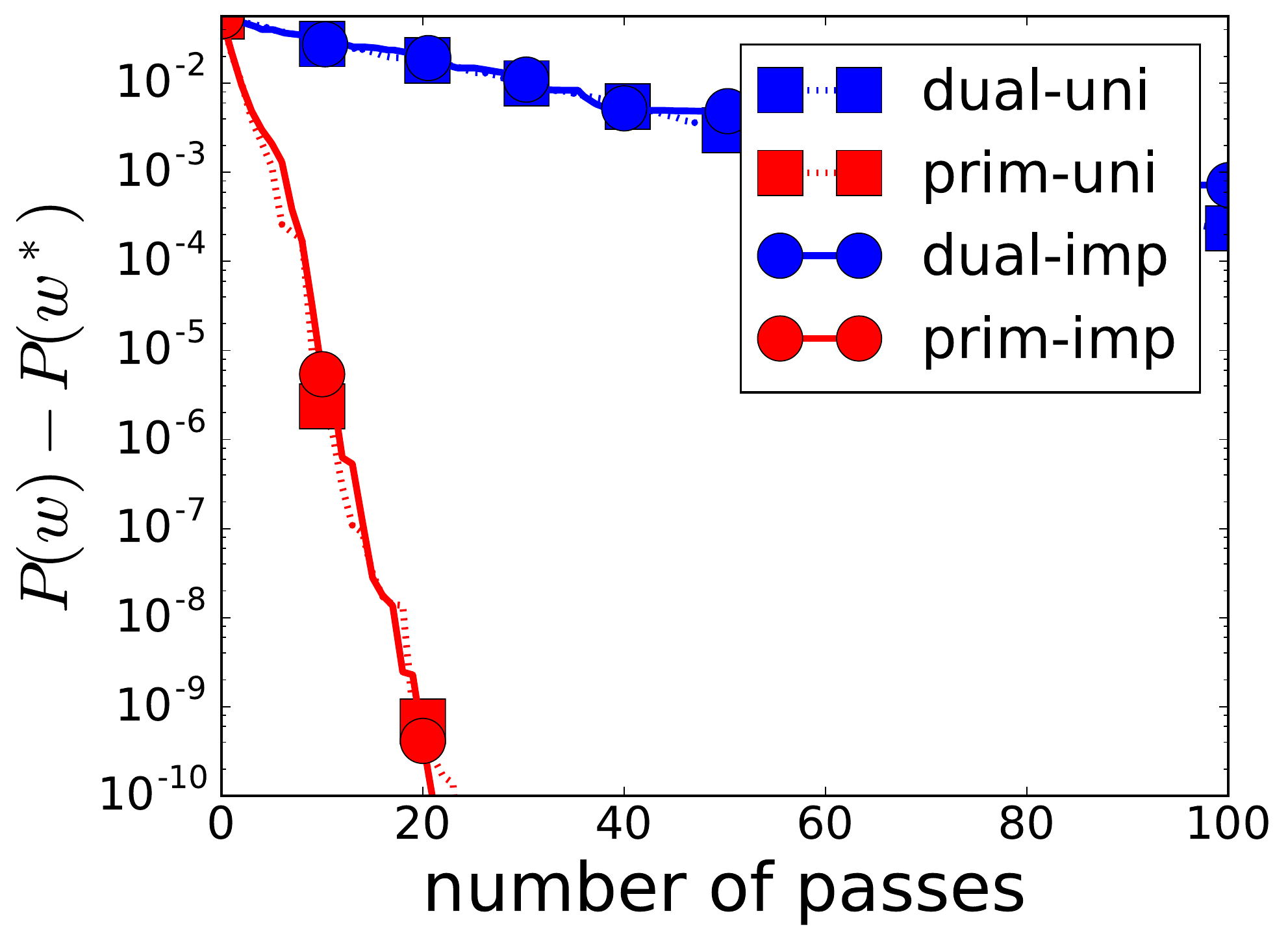}& \includegraphics[width=\mywidth\textwidth]{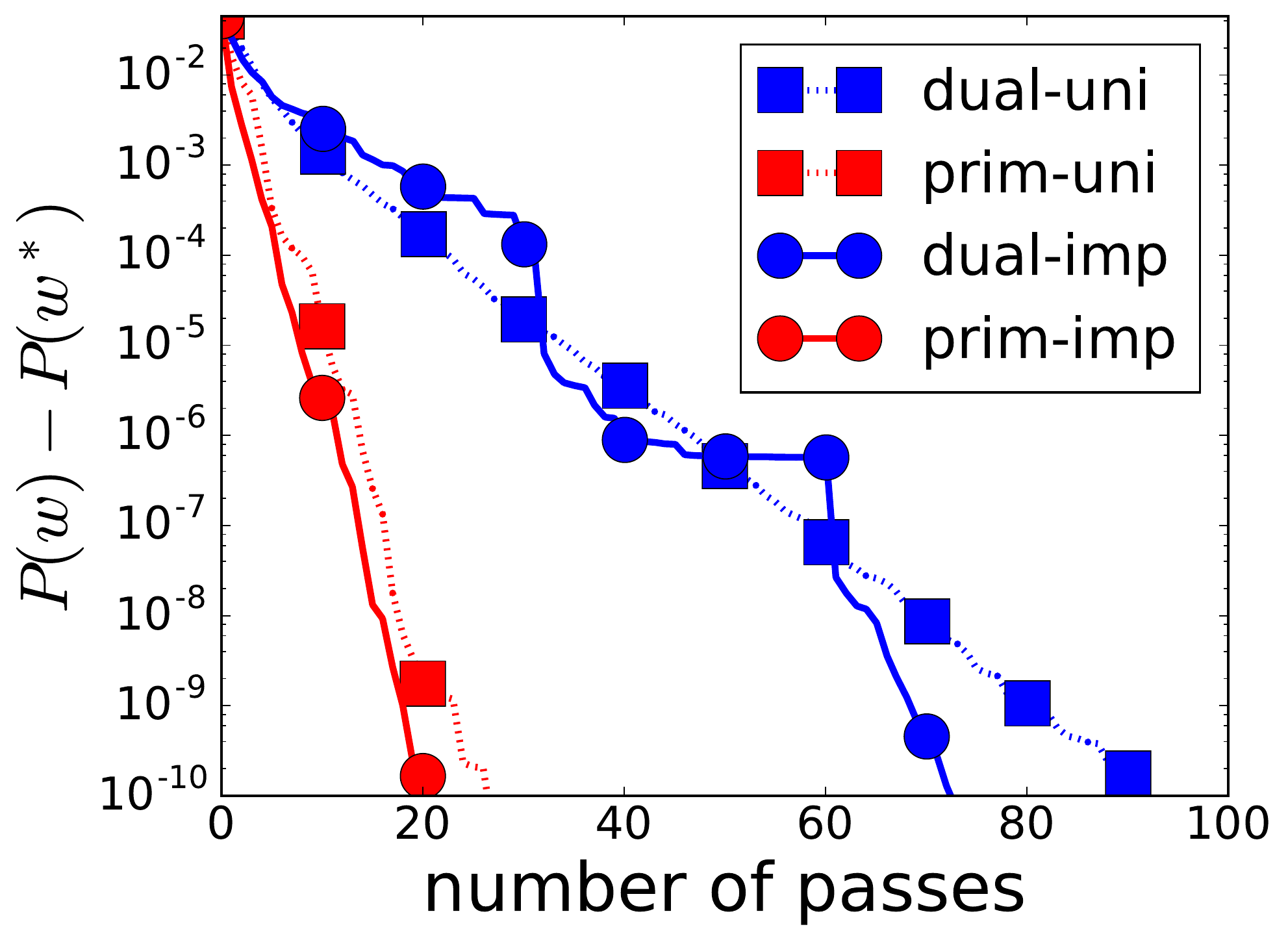}& \includegraphics[width=\mywidth\textwidth]{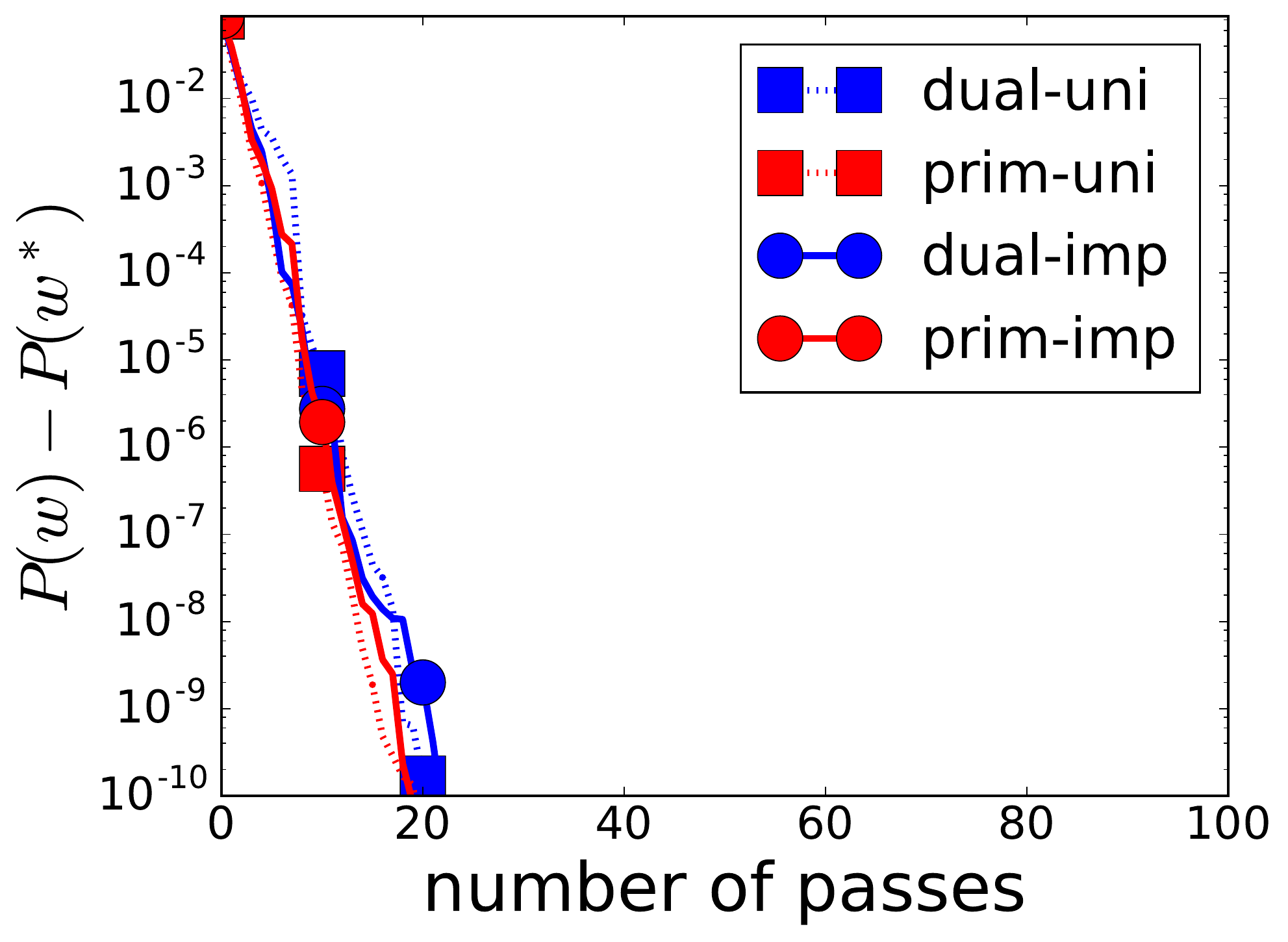}\\
		\rotatebox[origin=c]{90}{$ n = 1,000 $} & \includegraphics[width=\mywidth\textwidth]{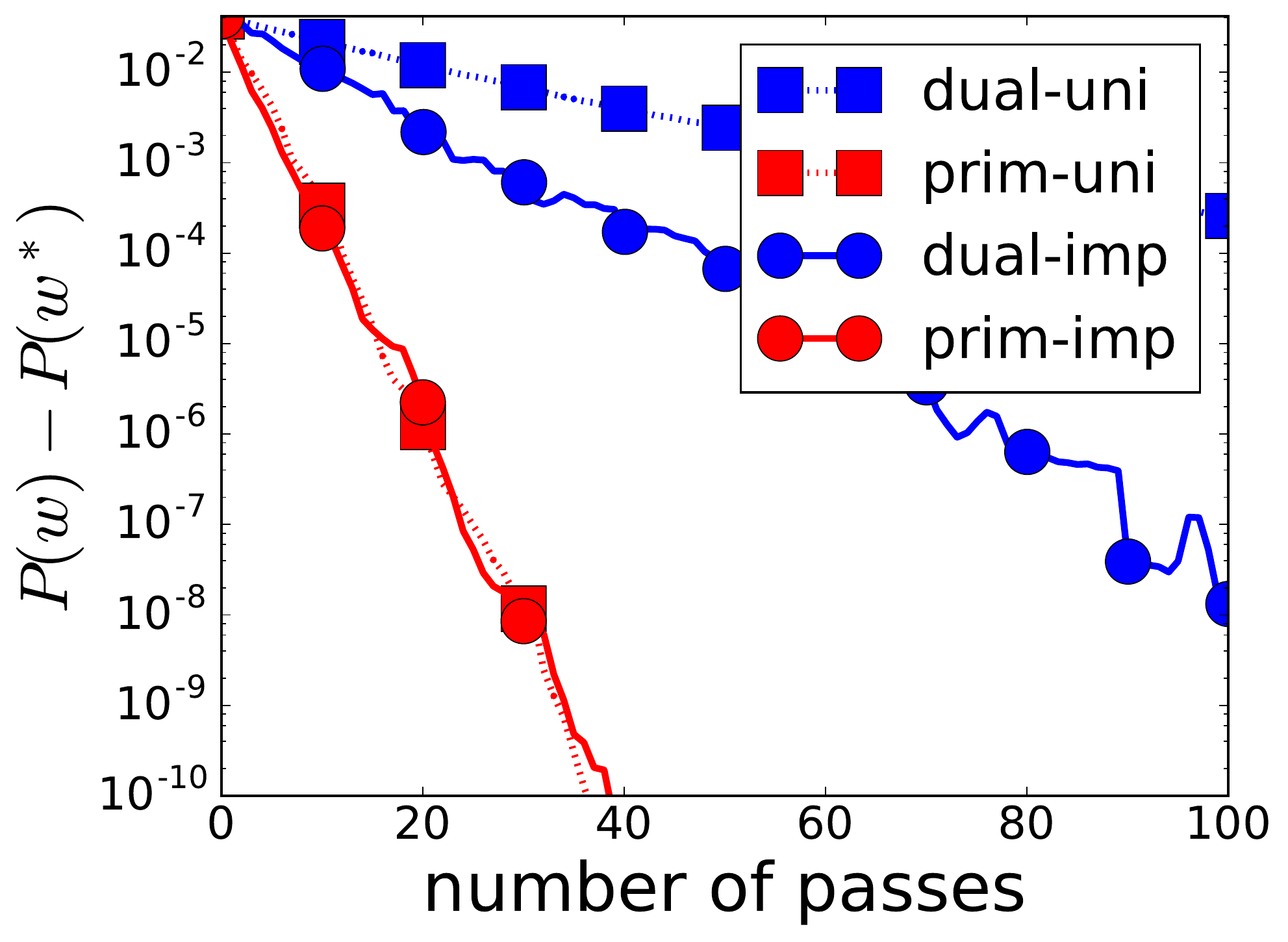}& \includegraphics[width=\mywidth\textwidth]{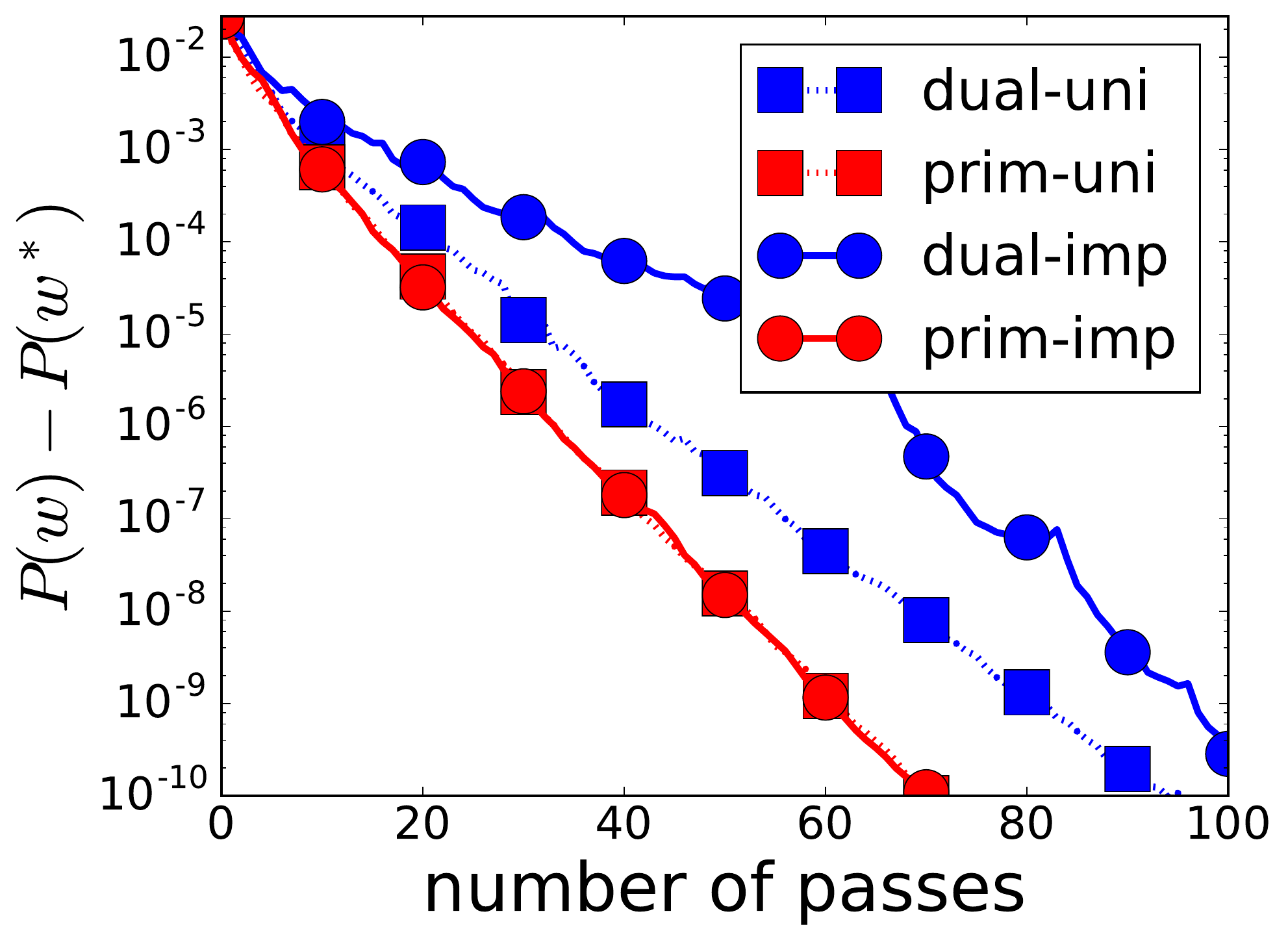}& \includegraphics[width=\mywidth\textwidth]{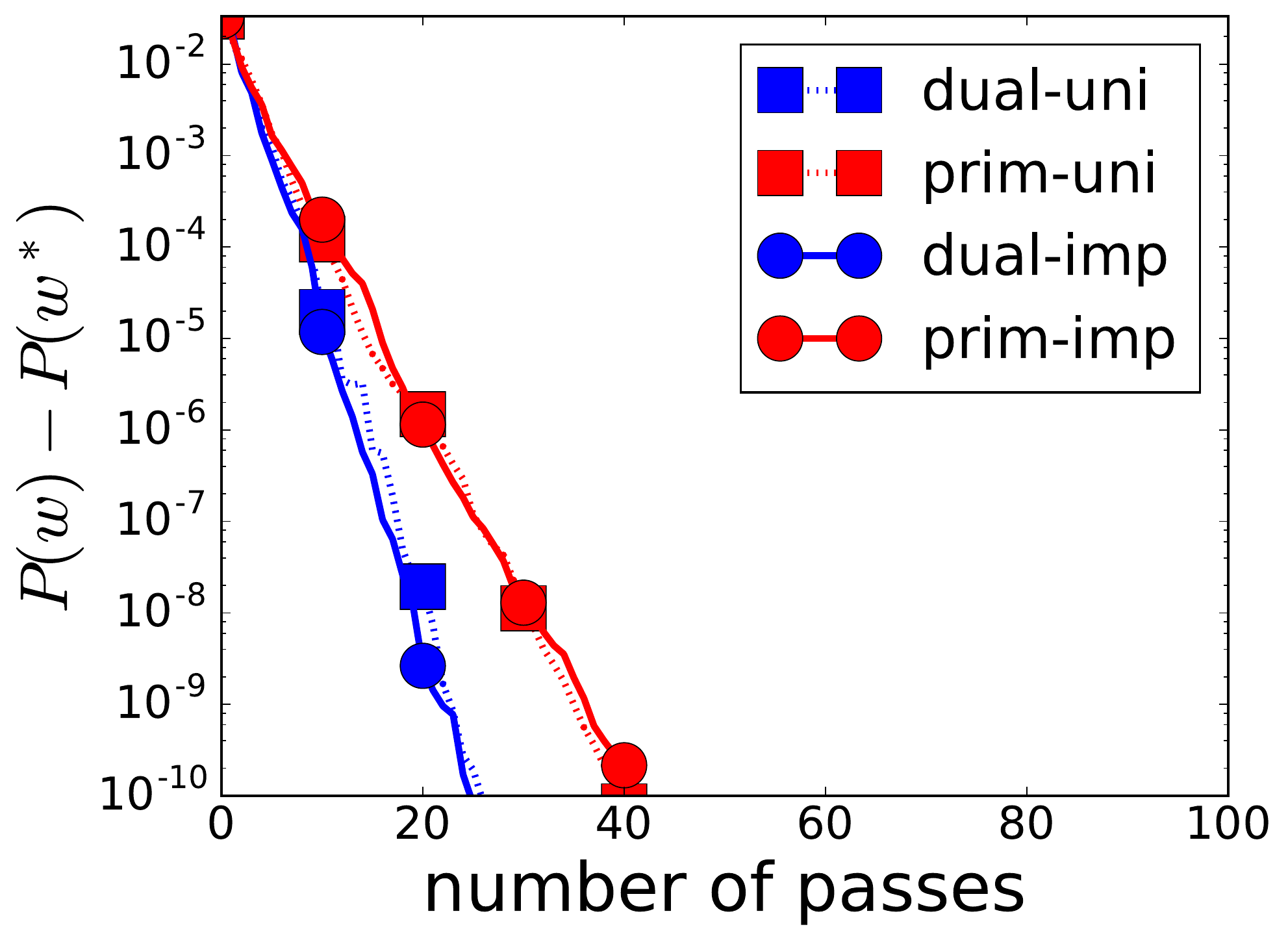}\\
		\rotatebox[origin=c]{90}{$ n = 10,000 $} & \includegraphics[width=\mywidth\textwidth]{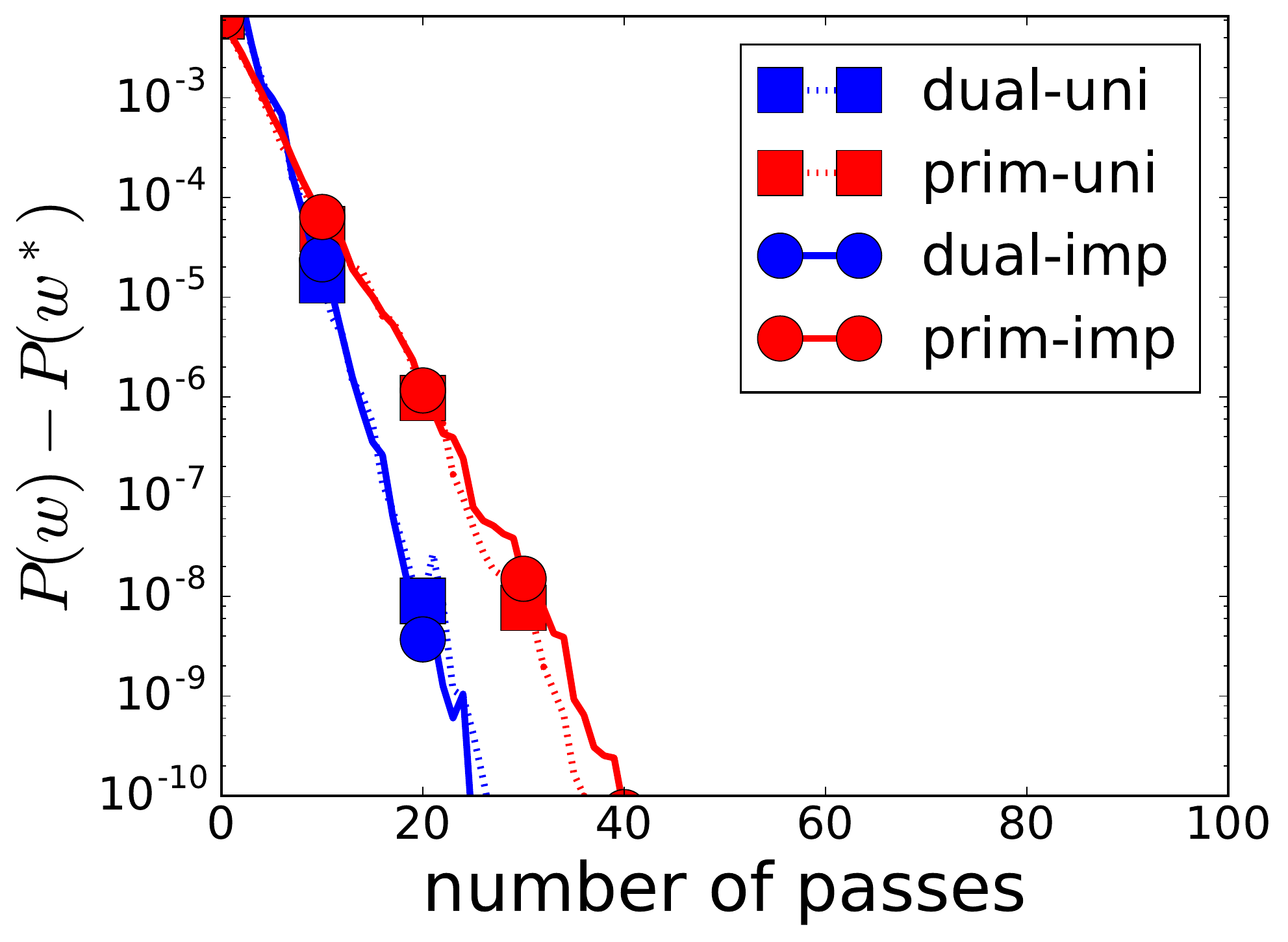}& \includegraphics[width=\mywidth\textwidth]{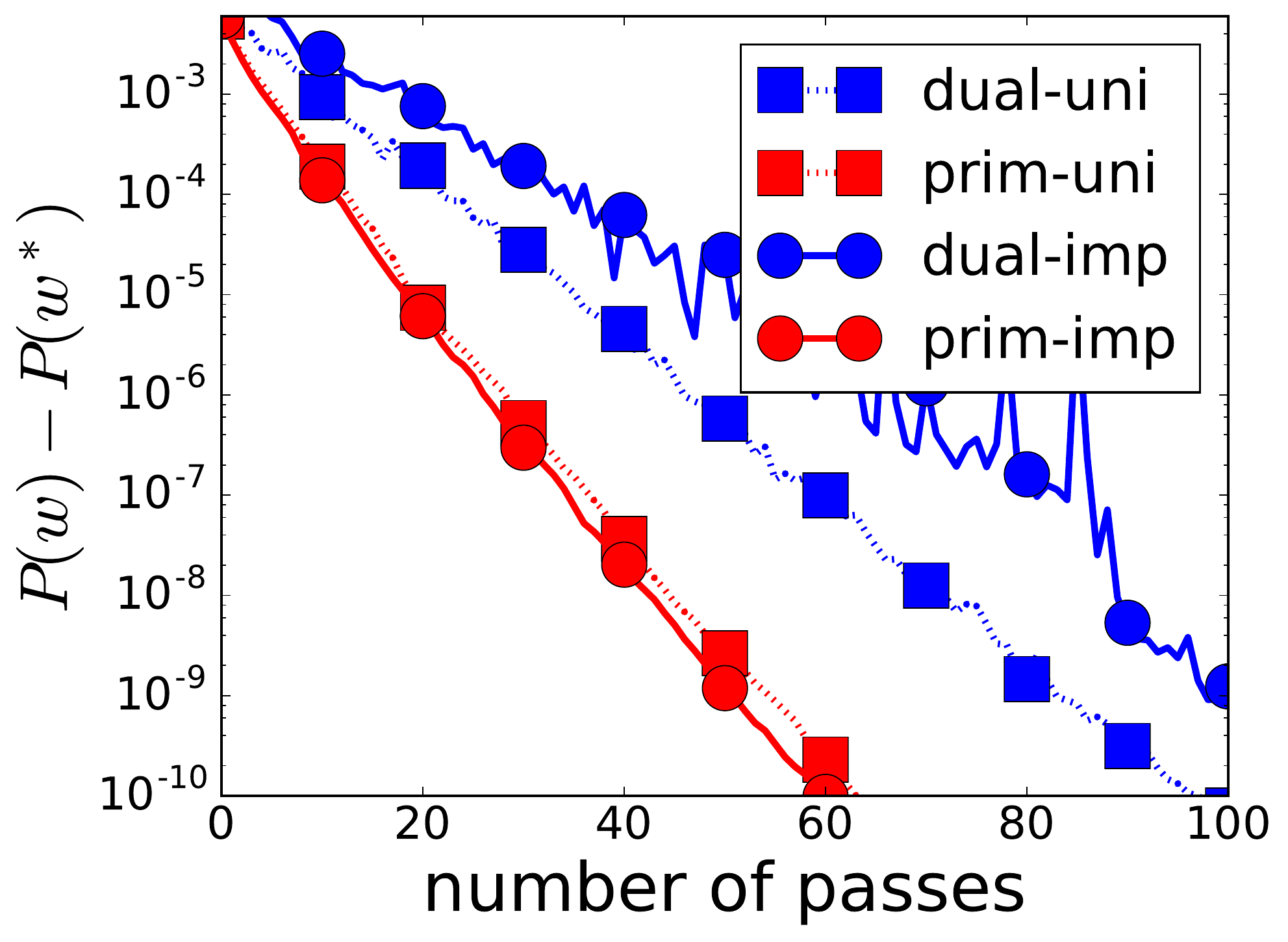}& \includegraphics[width=\mywidth\textwidth]{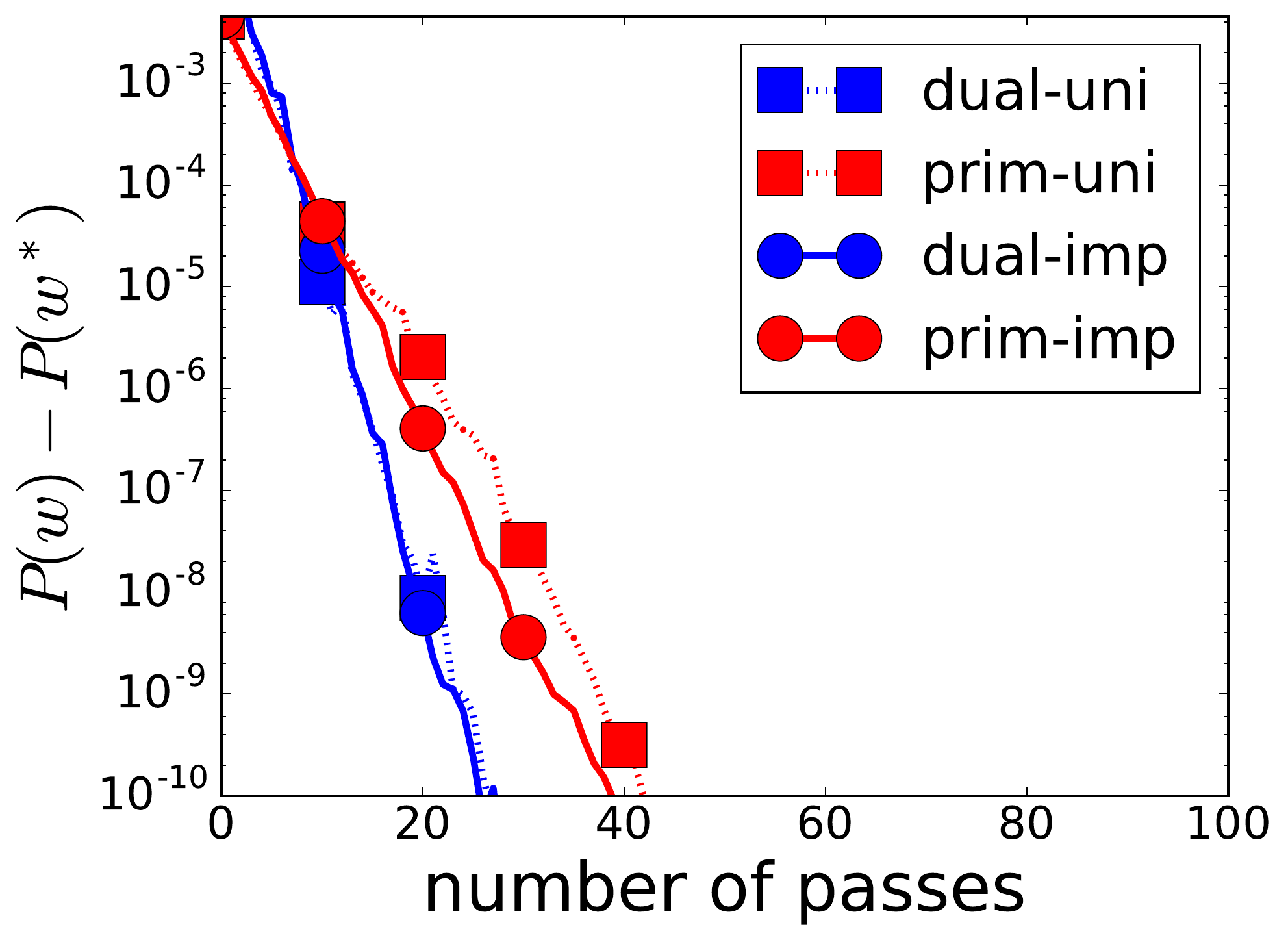}
	\end{tabular} 
		\caption{Worst-case experiments with various dimensions and sparsity levels for $d = 100$}
	\label{tab:table_experiments}
\end{figure}

\subsection{Real Data}

We used two real datasets to showcase our theory: news and leukemia\footnote{both datasets are available from https://www.csie.ntu.edu.tw/~cjlin/libsvmtools/datasets/}. The news dataset in Figure~\ref{subfig:news} is a nice example of our theory in practice. As shown in Table~\ref{tab:alltable} we have $d \gg n$, but the dual method is empirically faster than the primal one. The reason is simple: the news dataset uses a bag of words representation of news articles. If we look at the distribution of features (words), there are many words which appear just very rarely and there are words commonly used in many articles. The features have therefore a very skewed distribution of their nonzero entries. On the other hand, the examples have quite uniform distribution, as the number of distinct words in an article acts nicely. This distribution of nonzero entries highly favors the dual approach, as shown in the theory. The leukemia dataset in Figure~\ref{subfig:leukemia} is a fully dense dataset and  $d \gg n$. Therefore, as our theoretical analysis shows, the primal approach should be better. The ratio between the runtimes is not very large, as the constant $\|\bX\|_0$ is of similar order as the additional term in the computation of the true runtime. The empirical speedup in Figures~\ref{subfig:news} and Figures~\ref{subfig:leukemia}  matches the theoretical predictions from Table~\ref{tab:alltable}.

\begin{table}[]
	{
	\begin{center}
	\begin{tabular}{l r r r r l l cc}
		dataset & $d$ & $n$ & density & $\|\bX\|_0$ & $\CP$ & $\CD$ &  $T_P/T_D$ \\ 
		\hline news & 1,355,191 & 19,996 & 0.03\% & 9,097,916 & $3 \times 10^7$ & $9 \times 10^6$ & 2.0  \\
		leukemia & 7,129 & 38 & 100.00\% & 270,902 & $1\times 10^7$ & $2 \times 10^9$ & 0.5  
		\\\end{tabular}
		\end{center}
		}
	\caption{Details on the datasets used in the experiments}
	\label{tab:alltable}		
\end{table}

\section{Conclusions and Extensions} We have shown that the question whether RCD should be applied to the primal or the dual problem depends on the structure of the training dataset. For dense data, this simply boils down to whether we have more data or parameters, which is intuitively appealing. We have shown, both theoretically, and through experiments with synthetic and real datasets, that contrary to what seems to be a popular belief, primal RCD can outperform dual RCD even if $n\gg d$.

 In order to focus on the main message, we have chosen to present our results for simple (as opposed to ``accelerated'') variants of RCD. However, our results can be naturally extended to accelerated  variants of RCD, such as APPROX \cite{APPROX}, ASDCA~\cite{ASDCA}, APCG~\cite{APCG}, ALPHA~\cite{ALPHA} and SPDC~\cite{SPDC}. 
 
 Likewise, for simplicity, we focused on serial sampling (i.e., sampling a single coordinate). However, it is possible to use our approach to gain insights into the performance of primal vs dual RCD for arbitrary sampling strategies \cite{NSync, Quartz, ALPHA, ESO}.

{
\bibliography{final}
}

\clearpage

\section*{APPENDIX}

\section*{Proof of Theorem~\ref{thm:primal_and_dual}}


We say that $P \in \mathcal{C}^1(\bM)$, if \[P(w + h) \leq P(w) + \langle \nabla P(w), h\rangle + \frac{1}{2}h^\top \bM h, \quad \forall w,h \in \R^d.\]
For three vectors $a,b,c \in \R^n$ we define $\langle a,b\rangle_c \eqdef \sum_{i=1}^d a_ib_ic_i$ and $\|a\|_c^2 \eqdef \langle a, a \rangle_c = \sum_{i=1}^d c_i a_i^2 .$ Also, let for $\emptyset \neq S\subseteq [d]$ and $h\in \R^d$, we write $h_{S} \eqdef \sum_{i \in S} h_i e_i$, where $e_i$ is the $i$-th coordinate  vector (i.e., standard basis vector) in $\R^d$.

We will need the following two lemmas.
\begin{lemma} \label{lem:C1M}The primal objective $P$ satisfies $P \in {\cal C}^1(\bM)$, where $\bM =\lambda \bI + \frac{\beta}{n}\bX \bX^\top$. 
\end{lemma}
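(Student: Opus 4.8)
The plan is to prove the inequality defining $P \in \mathcal{C}^1(\bM)$ by a direct first-order expansion, treating the smooth loss term and the quadratic regularizer separately and then reassembling. First I would fix arbitrary $w, h \in \R^d$ and write out $P(w+h)$ from the definition \eqref{def:Primal}, introducing the scalar shorthands $s_j \eqdef \langle \bX_{:j}, w\rangle$ and $t_j \eqdef \langle \bX_{:j}, h\rangle$, so that $\langle \bX_{:j}, w+h\rangle = s_j + t_j$.

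The regularizer is exactly quadratic, so $\frac{\lambda}{2}\|w+h\|_2^2 = \frac{\lambda}{2}\|w\|_2^2 + \lambda\langle w, h\rangle + \frac{\lambda}{2}\|h\|_2^2$ with no error term; all the work is in the loss. For each $j$ I would apply the right-hand (upper) inequality of the $\beta$-smoothness assumption \eqref{eq:phi_smooth} with $s = s_j$ and $t = t_j$, giving $\phi_j(s_j+t_j) \leq \phi_j(s_j) + \phi_j'(s_j)t_j + \frac{\beta}{2}t_j^2$. Averaging over $j$ and combining with the regularizer, the zeroth-order terms reassemble into $P(w)$, while the first-order terms $\frac{1}{n}\sum_j \phi_j'(s_j) t_j + \lambda\langle w,h\rangle$ should be recognized as $\langle \nabla P(w), h\rangle$, using $\nabla P(w) = \frac{1}{n}\sum_j \phi_j'(s_j)\bX_{:j} + \lambda w$ together with $t_j = \langle \bX_{:j}, h\rangle$.

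The key step is identifying the second-order remainder with the stated matrix $\bM$. The loss contributes $\frac{\beta}{2n}\sum_j t_j^2 = \frac{\beta}{2n}\sum_j \langle \bX_{:j}, h\rangle^2 = \frac{\beta}{2n}\|\bX^\top h\|_2^2 = \frac{\beta}{2n}\, h^\top \bX\bX^\top h$, and the regularizer contributes $\frac{\lambda}{2}\|h\|_2^2 = \frac{\lambda}{2}\, h^\top\bI h$. Adding these yields exactly $\frac{1}{2}h^\top\!\left(\lambda\bI + \frac{\beta}{n}\bX\bX^\top\right)\!h = \frac{1}{2}h^\top\bM h$, which closes the proof.

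Since every step is either an algebraic identity or a single application of the assumed upper bound in \eqref{eq:phi_smooth}, there is no serious obstacle; the only points requiring care are the bookkeeping that matches the first-order terms to $\langle \nabla P(w), h\rangle$ and the recognition that $\sum_j \langle \bX_{:j}, h\rangle^2$ is precisely the quadratic form $h^\top \bX\bX^\top h$. I would also note that only the right inequality in \eqref{eq:phi_smooth} (smoothness) is used here; the left inequality (convexity) plays no role in this lemma.
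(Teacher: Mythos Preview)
Your proposal is correct and follows essentially the same approach as the paper's proof: expand $P(w+h)$, apply the upper bound in \eqref{eq:phi_smooth} term-by-term, expand the quadratic regularizer exactly, and regroup into $P(w) + \langle \nabla P(w), h\rangle + \tfrac{1}{2}h^\top \bM h$. Your extra remarks (the $s_j,t_j$ shorthand, the identification $\sum_j \langle \bX_{:j},h\rangle^2 = h^\top \bX\bX^\top h$, and the observation that only the smoothness side of \eqref{eq:phi_smooth} is used) are helpful elaborations but do not change the argument.
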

\begin{proof}
\begin{align*}
P(w + h) &\stackrel{\eqref{def:Primal}}{=} \frac{1}{n}\sum_{i=1}^n \phi_i (\langle \bX_{:i}, w \rangle  + \langle \bX_{:i} ,h \rangle) + \frac{\lambda}{2}\|w + h\|^2 \\
&\stackrel{\eqref{eq:phi_smooth}}{\leq} \frac{1}{n}\sum_{i=1}^n \left[ \phi_i(\langle \bX_{:i}, w \rangle) + \phi_i'(\langle \bX_{:i}, w \rangle)\cdot \langle \bX_{:i} , h \rangle + \frac{\beta}{2}\langle \bX_{:i}, h \rangle^2 \right] + \frac{\lambda}{2}\|w\|^2 + \lambda \langle w, h \rangle + \frac{\lambda}{2}\|h\|^2 \\ 
&= \frac{1}{n}\sum_{i=1}^n \phi_i(\langle \bX_{:i}, w \rangle) + \frac{\lambda}{2}\|w\|^2 + \left\langle \frac{1}{n}\sum_{i=1}^n \phi_i'(\langle \bX_{:i}, w \rangle) \bX_{:i} + \lambda w  ~,~ h \right\rangle \\ 
&\quad + \frac{1}{2} h^\top \left( \frac{\beta}{n} \sum_{i=1}^n \bX_{:i} (\bX_{:i})^\top + \lambda \bI \right) h \\
&= P(w) + \langle \nabla P(w), h \rangle + \frac{1}{2} h^\top \bM h.
\end{align*}
\end{proof}

\begin{lemma}\label{lem:ESO_equiv}
	If $P \in \mathcal{C}^1(\bM)$ and $u'\in \R^d$ is such that $\bP \circ \bM \preceq \Diag(p \circ u')$, then \[\E[P(w + h_{[\hat{S}_P]})] \leq P(w) + \langle \nabla P(w), h \rangle_p + \frac{1}{2}\|h\|_{p \circ u'}^2.\]
\end{lemma}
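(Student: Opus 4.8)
The plan is to start from the defining inequality of $P \in \mathcal{C}^1(\bM)$, apply it with the random increment $h_{\hat{S}_P}$ in place of $h$, and then take expectation over the sampling term by term, matching each piece with the corresponding term in the target inequality. Writing $\hat{S}$ for $\hat{S}_P$ and letting $S$ denote a realization, the $\mathcal{C}^1(\bM)$ property gives
\[P(w + h_{S}) \leq P(w) + \langle \nabla P(w), h_{S}\rangle + \frac{1}{2} h_{S}^\top \bM h_{S},\]
and I would apply $\E[\cdot]$ to both sides. The linear term is the easy one: since $h_{\hat{S}} = \sum_{i \in \hat{S}} h_i e_i$, we have $\langle \nabla P(w), h_{\hat{S}}\rangle = \sum_{i \in \hat{S}} \nabla_i P(w)\, h_i$, and taking expectation introduces the marginal inclusion probabilities $p_i = \Prob(i \in \hat{S})$, so that
\[\E[\langle \nabla P(w), h_{\hat{S}}\rangle] = \sum_{i=1}^d p_i\, \nabla_i P(w)\, h_i = \langle \nabla P(w), h\rangle_p,\]
which is exactly the weighted inner product appearing in the statement.

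The heart of the argument is the quadratic term, and this is the step I expect to require the most care. Writing $(h_{\hat{S}})_i = h_i \mathbf{1}[i \in \hat{S}]$, the product $(h_{\hat{S}})_i (h_{\hat{S}})_j$ equals $h_i h_j\, \mathbf{1}[i \in \hat{S}]\,\mathbf{1}[j \in \hat{S}]$, whose expectation is $h_i h_j \bP_{ij}$ by the very definition $\bP_{ij} = \Prob(i \in \hat{S}, j \in \hat{S})$. Summing against $\bM$ then yields the Hadamard-product identity
\[\E[h_{\hat{S}}^\top \bM h_{\hat{S}}] = \sum_{i,j} \bM_{ij}\, \bP_{ij}\, h_i h_j = h^\top (\bP \circ \bM)\, h.\]
At this point the hypothesis $\bP \circ \bM \preceq \Diag(p \circ u')$ is applied directly to bound $h^\top (\bP \circ \bM)\, h \leq h^\top \Diag(p \circ u')\, h = \sum_i p_i u'_i h_i^2 = \|h\|_{p \circ u'}^2$.

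Combining the three estimates gives the claim, after dividing the quadratic contribution by the factor $\tfrac{1}{2}$ inherited from the $\mathcal{C}^1(\bM)$ inequality. The only genuine subtlety is recognizing that the expected quadratic form collapses precisely onto $\bP \circ \bM$: this is what converts the pairwise inclusion probabilities into the Hadamard product and makes the ESO hypothesis the exact tool required, with everything else being routine bookkeeping over the marginals $p_i$. I would therefore present the proof in the order linear-term, quadratic-term, assembly, flagging the identity $\E[h_{\hat{S}}^\top \bM h_{\hat{S}}] = h^\top(\bP \circ \bM) h$ as the one nontrivial computation.
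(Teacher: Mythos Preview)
Your argument is correct and complete. The paper itself does not prove this lemma at all: it simply writes ``See \cite{ESO}, Section~3'' and defers the entire argument to that reference. What you have written is exactly the standard ESO computation one finds there---apply the $\mathcal{C}^1(\bM)$ inequality to the random increment, take expectation, use $p_i=\Prob(i\in\hat{S}_P)$ on the linear term and $\bP_{ij}=\Prob(i\in\hat{S}_P,\,j\in\hat{S}_P)$ on the quadratic term to obtain $h^\top(\bP\circ\bM)h$, then invoke the semidefinite hypothesis---so your proposal is in fact more informative than the paper's own treatment.
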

\begin{proof}
	See \cite{ESO}, Section 3.
\end{proof}

We can now proceed to the proof of Theorem ~\ref{thm:primal_and_dual}.

First, note that \[\bP \circ \bM = \lambda \mbox{Diag}(p) + \frac{\beta}{n} (\bP \circ \bX\bX^\top) \preceq \lambda \mbox{Diag}(p) + \frac{\beta}{n}\mbox{Diag}(p \circ u) \]
with $u$ defined as in \eqref{eq:ESO_P}.  We now separately establish the two complexity results; (i) for primal RCD and (ii) for dual RCD.

(i) The proof is a consequence of the proof of the main theorem of \cite{NSync}. Assumption 1 from \cite{NSync} holds with $w_i := \lambda + \frac{\beta}{n}u_i$ (Lemma~\ref{lem:C1M} \& Lemma~\ref{lem:ESO_equiv}) and Assumption 2 from \cite{NSync} holds with standard Euclidean  norm and $\gamma := \lambda$. We follow the proof all the way to the bound \[\E[P(w^k) - P(w^*)] \leq (1 - \mu)^k (P(w^0) - P(w^*))\]
which holds for $\mu$ defined by \[ \mu := \frac{\lambda}{\max_i \frac{n\lambda + \beta u_i}{n p_i}}\] by direct substitution of the quantities. The result follows by standard arguments. Note that $C_P = P(w^0) - P(w^*).$

(ii) The proof is a direct consequence of the proof of the main theorem of \cite{Quartz}, using the fact that $ P(w^k) - P(w^*) \leq P(w^k) - D(\alpha^k)$, as the weak duality holds. Note that $C_D = P(w^0) - D(\alpha^0).$

\section*{Proof of Theorem~\ref{thm:runtime}}

The proofs for Algorithm~\ref{alg:nsync} and Algorithm~\ref{alg:quartz} are analogous, and hence we will  establish the result for Algorithm~\ref{alg:nsync} only. For brevity, denote $s_i = \beta u_i + \lambda n$. We aim to solve the  optimization problem: \begin{equation} \label{eq:runimp1}
p^* \quad \leftarrow \quad \argmin_{p \in \R^d_+ ~:~ \sum_i p_i = 1} \quad T_P \overset{\eqref{eq:T_P}}{=}  \left( \max_{i \in [d]} \frac{s_i}{ p_i \lambda n} \right) \cdot \sum_{i=1}^d p_i \|\bX_{i:}\|_0.
\end{equation}
First observe, that the problem is homogeneous in $p$, i.e., if $p$ is optimal, also $c p$ will be optimal for $c >0$, as the solution will be the same. Using this argument, we can remove the constraint $\sum_i p_i = 1$. Also, we can remove the multiplicative factor $1/(\lambda n) $ from the denominator as it does not change the $\argmin$. Hence we get the simpler problem \begin{equation} \label{eq:runimp2}
p^* \quad \leftarrow \quad \argmin_{p \in \R^d_+} \quad \left[ \left( \max_{i \in [d]} \frac{s_i}{p_i} \right) \cdot \sum_{i=1}^d p_i \|\bX_{i:}\|_0 \right].
\end{equation}
Now choose optimal $p$ and assume that there exist $j,k$ such that $s_j/p_j < s_k/p_k$. By a small decrease in $p_j$, we will still have  $s_j/p_j \leq s_k/p_k$, and hence the term $\max_i s_i/p_i$ stays unchanged. However, the term $\sum_i p_i \|\bX_{i:}\|_0$ decreased. This means that the optimal sampling must satisfy $s_i/p_i = const$ for all $i$. However, this is precisely the importance sampling.


\section*{Proof of Theorem~\ref{thm:Bound_Any_Data}}

By assumption, all rows and columns of $\bX$ are nonzero. Therefore,  $1 \leq \|\bX_{i:}\|_0\leq n$ and $1\leq \|\bX_{:j}\|_0\leq d$, and the bounds on $\CP$ and $\CD$ follow by applying this to \eqref{eq:two_quantities}. The bounds for the ratio follow immediately by combining the previous bounds. It remains to establish tightness. For $a,b,c\in \R$, let $\bX(a,b,c) \in \R^{d \times n}$ be the matrix defined as follows: \[\bX_{ij}(a,b,c) = \begin{cases}
a \quad&i \neq 1 \wedge j=1   \\
b \quad&i = 1 \wedge j \neq 1 \\
c \quad&i = 1 \wedge j = 1 \\
0 \quad&\text{otherwise}.
\end{cases}\]
Notice that $\bX(a,b,c)$ does not have any zero rows nor columns as long as $a,b,c$ are nonzero. Since $\CP(\bX(a,b,c)) =   (d-1)a^2+n(n-1)b^2 + nc^2$ and $\CD(\bX(a,b,c)) = d(d-1)a^2 + (n-1)b^2 + dc^2$, one readily sees that
\[\lim_{\substack{b\to 0 \\ c\to 0}} \frac{\CP(\bX(a,b,c))}{\CD(\bX(a,b,c))} = \frac{1}{d} \qquad \text{and} \qquad  \lim_{\substack{a\to 0 \\ c\to 0}} \frac{\CP(\bX(a,b,c))}{\CD(\bX(a,b,c))} = n .\]


\newpage
\section*{Proof of Theorem~\ref{lem:bound_ratio}}

We first need a lemma.

\begin{lemma} \label{lem:general_binary_maxmin} Let $\alpha$ be an integer satisfying $\max\{d,n\}\leq \alpha \leq dn$ and let $L$ and $U$ be the functions defined in Section~\ref{sec:binary}. We have the following identities:

\begin{eqnarray}L(\alpha,n) &=&   \min_{\bX \in \BB} \{ \CD(\bX) \;:\; \|\bX\|_0 = \alpha \} \label{eq:4ID-1} \\
L(\alpha,d) &=& \min_{\bX \in \BB}   \{\CP(\bX) \;:\; \|\bX\|_0 = \alpha \} \label{eq:4ID-2}\\
 U(\alpha, d, n) &=& \max_{\bX \in \BB} \{\CD(\bX) \;:\; \|\bX\|_0 = \alpha\} \label{eq:4ID-3}\\
 U(\alpha, n, d) &=& \max_{\bX \in \BB} \{\CP(\bX) \;:\; \|\bX\|_0 = \alpha \} . \label{eq:4ID-4}
 \end{eqnarray}
\end{lemma}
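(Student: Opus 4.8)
The plan is to reduce all four identities to an integer optimization over degree sequences, using the fact stated in Section~\ref{sec:binary} that for $\bX \in \BB$ the quantities $\CP$ and $\CD$ depend only on the $\{0,1\}$-support of $\bX$. Writing $r_i \eqdef \|\bX_{i:}\|_0$ and $c_j \eqdef \|\bX_{:j}\|_0$, we have $\CP(\bX) = \sum_{i=1}^d r_i^2$ and $\CD(\bX) = \sum_{j=1}^n c_j^2$, so that the membership $\bX \in \BB$ amounts to $1 \le c_j \le d$ for all $j$, $1 \le r_i \le n$ for all $i$, and realizability of these degrees by a common $0/1$ matrix with $\|\bX\|_0 = \alpha$. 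Since $\CP(\bX) = \CD(\bX^\top)$ and transposition swaps $d \leftrightarrow n$ while preserving the ``no zero rows or columns'' class, I would prove only the two $\CD$-statements \eqref{eq:4ID-1} and \eqref{eq:4ID-3}; the two $\CP$-statements \eqref{eq:4ID-2} and \eqref{eq:4ID-4} then follow by applying them to $\bX^\top$ (an $n\times d$ matrix) and relabelling.

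The key structural step I would isolate first is a \emph{decoupling} claim: a vector $(c_j)_{j=1}^n$ is the column-degree sequence of some $\bX \in \BB$ if and only if $1 \le c_j \le d$ for all $j$ and $\sum_j c_j = \alpha$. The forward direction is immediate; for the converse I would place the ones column by column, always spending a column's ones on still-uncovered rows before reusing covered ones. Because the total number of ones is $\alpha \ge \max\{d,n\} \ge d$, this greedy procedure covers all $d$ rows, so no row is empty, and $c_j \ge 1$ guarantees no column is empty. The upshot is that the no-zero-row constraint never restricts which column-degree vectors are attainable, and each of \eqref{eq:4ID-1} and \eqref{eq:4ID-3} collapses to the clean box-constrained integer program $\min/\max \sum_{j=1}^n c_j^2$ subject to $\sum_j c_j = \alpha$ and $1 \le c_j \le d$.

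These two programs I would solve by exchange (smoothing) arguments. For the minimum, the transfer $(c_j,c_k)\to(c_j-1,c_k+1)$ strictly decreases $\sum_j c_j^2$ whenever $c_j \ge c_k + 2$, so the optimum is the most balanced vector, with every entry in $\{\lfloor\alpha/n\rfloor,\lfloor\alpha/n\rfloor+1\}$; counting the $\alpha-\bar{\alpha}_n$ entries equal to $\lfloor\alpha/n\rfloor+1$ and the rest equal to $\lfloor\alpha/n\rfloor$ and simplifying reproduces exactly $L(\alpha,n)$. For the maximum, the reverse transfer $(c_j,c_k)\to(c_j+1,c_k-1)$ strictly increases $\sum_j c_j^2$ whenever $c_j \ge c_k$ with $c_j < d$ and $c_k > 1$, so at optimum at most one coordinate is strictly interior and all others sit at a bound; pushing as much mass as possible to the cap $d$ gives $\lfloor(\alpha-n)/(d-1)\rfloor$ saturated columns, one partially filled column, and the remaining columns at $1$, whose squared norm is exactly $U(\alpha,d,n)$. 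I would check that each extremal vector stays in $[1,d]$: the hypothesis $\alpha \ge n$ forces $\lfloor\alpha/n\rfloor \ge 1$ so no balanced entry is zero, and $\alpha \le dn$ caps the saturated count in the concentrated vector, so both optima are realizable via the greedy construction and lie in $\BB$.

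The routine parts are the two smoothing arguments, which are standard consequences of the convexity of $t\mapsto t^2$. The part demanding the most care, and which I expect to be the main obstacle, is the realizability bookkeeping together with the exact matching of the floor-rounded closed forms: I must verify that the \emph{concentrated} sequence used for the maximum, which piles mass into a few fully dense columns, still admits a completion with no empty rows (it does, since any saturated column already covers every row, and when none is saturated the greedy argument applies because $\alpha \ge d$), and that after the floor arithmetic the resulting value is literally $U(\alpha,d,n)$ rather than an off-by-a-rounding-term expression. I would therefore treat the boundary cases explicitly, namely a zero remainder $\alpha - \bar{\alpha}_n = 0$ in the balanced vector, the extremes $\lfloor(\alpha-n)/(d-1)\rfloor \in \{0,n\}$ in the concentrated vector, and the degenerate $d=1$ (where $\alpha=n$ is forced and the minimum and maximum coincide), since these are precisely the points at which the stated formulae for $L$ and $U$ could otherwise fail to hold verbatim.
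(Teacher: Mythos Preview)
Your proposal is correct, and the core argument---exchange/smoothing on the column-degree sequence together with the symmetry $\CP(\bX)=\CD(\bX^\top)$---is the same as the paper's. The one structural difference is that you first isolate a realizability lemma (every integer vector $(c_j)$ with $1\le c_j\le d$ and $\sum_j c_j=\alpha\ge d$ arises as the column-degree sequence of some $\bX\in\BB$) and then optimize purely over sequences, whereas the paper performs the exchanges directly on matrix entries: for the minimum it swaps two entries lying in a common row, and for the maximum it moves a nonzero from one column to another. Your decoupled route is a touch more careful about remaining inside $\BB$ during the max-swap; on the other hand, the paper's within-row swap for the minimum has the bonus property of leaving $\CP$ unchanged, a fact it exploits immediately afterwards to establish the tightness claim in Theorem~\ref{lem:bound_ratio}.
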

\begin{proof}
	Let $\bX \in \BB$ be an arbitrary matrix and let $ \omega = (\omega_1, \dots, \omega_n)$, where $\omega_j \eqdef \|\bX_{:j}\|_0$. Let $\alpha = \|\bX\|_0 = \sum_j \omega_j$.	Observe that $\CD(\bX) = \sum_{j=1}^n \|\bX_{:j}\|_0^2=  \|\omega\|_2^2$.

\begin{enumerate}
\item[(i)] We shall first establish \eqref{eq:4ID-1}. Assume that the exist two columns $j,k$ of $\bX$, such that $\omega_j + 2 \leq \omega_k$, i.e., their difference in the number of nonzeros is at least 2. Because $\omega_k > \omega_j$, there has to exist a row which has a nonzero entry in the $k$-th column and a zero entry in the $j$-th column. Let $\bX'$ be the matrix obtained from $\bX$ by switching these two entries. Note that $\CP(\bX) = \CP(\bX')$. However, we have \[\CD(\bX) - \CD(\bX') = \omega_j^2 + \omega_k^2 - (\omega_j+1)^2 - (\omega_k - 1)^2 = 2 \omega_k - 2\omega_j - 2 > 0.\] It follows that while there exist two such columns, the minimum is not achieved. So, we only need to consider matrices $\bX$ for which there exists integer $a$ such that $\omega_j = a$ or $\omega_j = a+1$ for every $j$. Let $b = |\{j ~:~ \omega_j = a\}|$. 


We can now without loss of generality assume that $0 \leq b \leq n-1$. Indeed, we can do this is because the choices $b=0$ and $b=n$ lead to the same matrices, and hence by focusing on $b=0$ we have not removed any matrices from consideration. With simple calculations we get  \[\alpha = ba + (n-b)(a+1) = n(a + 1) - b.\]
Note that $\alpha + b$ is a multiple of $n$. It follows that $b = n - \alpha + \bar{\alpha}_n$ and $a = \bar{\alpha}_n / n$. Up to the ordering of the columns (which does not affect $\CD(\bX)$) we have just one candidate $\bX$, therefore it has to be the minimizer of $\CD$. Finally, we can easily calculate the minimum as \begin{align*} \sum_{j=1}^n \omega_j^2 &= ba^2 + (n-b)(a+1)^2 = (n-\alpha + \bar{\alpha}_n)\left(\frac{\bar{\alpha}_n}{n} \right)^2 + (\alpha-\bar{\alpha}_n)\left( \frac{\bar{\alpha}_n}{n} + 1 \right)^2 \\
		&= \frac{1}{n}\left(\bar{\alpha}_n^2 + (\alpha - \bar{\alpha}_n)(2 \bar{\alpha}_n + n) \right)= L(\alpha,n).
	\end{align*}
	
\item[(ii)]	Claim \eqref{eq:4ID-2}  follows from part \eqref{eq:4ID-1} via symmetry: $\CP(\bX) = \CD(\bX^\top)$ and $\|\bX\|_0=\|\bX^\top\|_0$. 
	
\item[(iii)] We now establish claim \eqref{eq:4ID-3}. Assume that there exist a pair of columns $j,k$ such that $1 < \omega_j \leq \omega_k < d$. Let $\bX'$ be the matrix obtained from $\bX$ by  zeroing out an entry in the $j$-th column and putting a nonzero inside the $k$-th column. Then \[\CD(\bX') - \CD(\bX) = (\omega_j-1)^2 + (\omega_k + 1)^2 - \omega_j^2 - \omega_k^2 = 2\omega_k - 2\omega_j + 2 > 0.\]
	It follows that while there exist such a pair of columns, the maximum is not achieved. This condition leaves us with matrices $\bX$ where at most one column $j$ has $\omega_j$ {\em not} equal to 1 or $d$. 
	
	Formally, let $a = |\{j ~:~ \omega_j = d\}|$. Then we have $n-a-1$ columns with $1$ nonzero and 1 column with $b$ nonzeros, where $1 \leq b < d$. This is correct, as $b = d$ is the same as $b=1$ and $a$ being one more. We can compute $a$ and $b$ from the equation \begin{align*} (n - a - 1)\cdot 1 + 1 \cdot b + a\cdot d &= \alpha \\
		b + a(d-1) &= \alpha - n + 1
	\end{align*}
	as the only solution to the division with remainder of $\alpha - n + 1$ by $d-1$, with the difference that $b \in \{1, \dots, d-1\}$ instead of the standard $\{0, \dots, d-2\}$. We get 
	\begin{align*}
		a = \left\lfloor \frac{a - n}{d-1} \right\rfloor \qquad \text{and} \qquad b = \alpha - n + 1 - \overline{(\alpha - n)}_{d-1}.	
	\end{align*}
	The maximum can now be easily computed as follows: \begin{align*}
		\sum_{j=1}^n \omega_j^2 &= (n - a - 1) + b^2 + ad^2 \\
		&= n-\left\lfloor \frac{a - n}{d-1} \right\rfloor-1 + \left(\alpha - n + 1 - \overline{(\alpha - n)}_{d-1} \right)^2 + \left\lfloor \frac{a - n}{d-1} \right\rfloor d^2 \\
		& = U(\alpha,d,n).
	\end{align*}
\item[(iv)]	Again, claim \eqref{eq:4ID-4} follows from \eqref{eq:4ID-3} via symmetry.
	\end{enumerate}
\end{proof}

We can now proceed to the proof of the theorem.

The quantity is the ratio between the maximal value of $\CP$ and the minimal value of $\CD$, we have to show that there exists a matrix $\bX$ such that this is achieved. Assume we have a matrix $\bX$ which has the maximal $\CP$. In the proof of Lemma~\ref{lem:general_binary_maxmin} we showed, that by switching entries in $\bX$ we can get the minimal value of $\CD$ without changing $\CP$. Therefore we can achieve maximal $\CP$ and minimal $\CD$ at the same time. Analogically for every other case.

\section*{Proof of Theorem~\ref{thm:98y9s8s} }

As shown in the main text, the theorem follows from the following lemma. Hence, we only need to prove the lemma.

\begin{lemma}\label{lem:bound_Rabc} If $d \geq n$ and $\alpha \geq n^2 + 3n$, then $R(\alpha,d,n) \leq 1$. If $n \geq d$ and $\alpha \geq d^2 + 3d$, then $R(\alpha,n,d) \leq 1$.
\end{lemma}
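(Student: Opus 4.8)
The plan is to prove the slightly stronger fact that $R(\alpha,d,n)\le 1$ holds already as soon as $\alpha\ge n^2$; since the hypothesis gives $\alpha\ge n^2+3n\ge n^2$, this suffices, and the second statement will then follow by transpose symmetry. Throughout I work in the natural range $\max\{d,n\}\le\alpha\le dn$ in which $\BB$ contains a matrix of cardinality $\alpha$.

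First I would unwind what $R(\alpha,d,n)\le 1$ means. By Theorem~\ref{lem:bound_ratio} (and the discussion in its proof), $R(\alpha,d,n)$ is exactly the largest possible value of the ratio $\CP(\bX)/\CD(\bX)$, namely the maximum of $\CP$ divided by the minimum of $\CD$ over $\bX\in\BB$ with $\|\bX\|_0=\alpha$; by Lemma~\ref{lem:general_binary_maxmin} these are $U(\alpha,n,d)$ and $L(\alpha,n)$ respectively. Hence
\[ R(\alpha,d,n)\le 1 \quad\Longleftrightarrow\quad \max_{\substack{\bX\in\BB\\ \|\bX\|_0=\alpha}}\CP(\bX)\ \le\ \min_{\substack{\bX\in\BB\\ \|\bX\|_0=\alpha}}\CD(\bX), \]
and the task reduces to comparing these two extremal values directly, without ever touching the floor-laden closed forms $U$ and $L$.

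The two values I would bound by one-line estimates. Every row of a $d\times n$ matrix has at most $n$ nonzeros, so $\|\bX_{i:}\|_0\le n$ and therefore
\[ \CP(\bX)=\sum_{i=1}^d\|\bX_{i:}\|_0^2\ \le\ n\sum_{i=1}^d\|\bX_{i:}\|_0 \ =\ n\alpha ; \]
in particular $\max\CP\le n\alpha$. Dually, by the Cauchy--Schwarz inequality applied to the $n$ column cardinalities,
\[ \CD(\bX)=\sum_{j=1}^n\|\bX_{:j}\|_0^2\ \ge\ \frac1n\Big(\sum_{j=1}^n\|\bX_{:j}\|_0\Big)^2\ =\ \frac{\alpha^2}{n}, \]
so $\min\CD\ge\alpha^2/n$. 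Combining, $\max\CP\le n\alpha\le \alpha^2/n\le\min\CD$, where the middle inequality $n\alpha\le\alpha^2/n$ is equivalent to $\alpha\ge n^2$. This establishes $R(\alpha,d,n)\le1$ for $\alpha\ge n^2$, hence for $\alpha\ge n^2+3n$.

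For the second statement I would transpose: since $\CP(\bX)=\CD(\bX^\top)$ and $\|\bX\|_0=\|\bX^\top\|_0$, the roles of $d$ and $n$ (and of $\CP$ and $\CD$) interchange, and the identical argument gives $\max\CD\le d\alpha\le\alpha^2/d\le\min\CP$ whenever $\alpha\ge d^2$, i.e.\ $R(\alpha,n,d)\le1$ for $\alpha\ge d^2+3d$. I expect the only real care to be needed in the very first step: one must correctly identify $R$ as $\max\CP/\min\CD$ rather than $\max\CD/\min\CD$, so that the inequality compares the row-concentrated configuration (maximizing $\CP$) against the column-balanced one (minimizing $\CD$). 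Were one instead to attack the closed forms $U(\alpha,n,d)$ and $L(\alpha,n)$ head-on, the genuine obstacle would be the remainder bookkeeping hidden in $\overline{(\alpha-d)}_{n-1}$ and $\bar{\alpha}_n$; the soft estimates above bypass this completely, at the minor cost of the marginally weaker threshold $n^2$ in place of $n^2+3n$.
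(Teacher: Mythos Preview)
Your proof is correct and takes a genuinely different route from the paper. The paper attacks the closed-form expressions $U(\alpha,n,d)$ and $L(\alpha,n)$ head-on: starting from the algebraic inequality $\alpha(n^2+3n)+n^3\le\alpha^2+dn^2$ (which packages both hypotheses $d\ge n$ and $\alpha\ge n^2+3n$ at once), it rearranges and then successively applies the crude floor estimates $a-b\le\bar a_b\le a$ to strip away the remainder terms hidden in $\overline{(\alpha-d)}_{n-1}$ and $\bar\alpha_n$ until the inequality $nU(\alpha,n,d)\le nL(\alpha,n)$ appears. Your argument instead reinterprets $R(\alpha,d,n)$ as $\max\CP/\min\CD$ via Lemma~\ref{lem:general_binary_maxmin}, and then replaces the exact extremal values by the soft bounds $\max\CP\le n\alpha$ (each row has at most $n$ nonzeros) and $\min\CD\ge\alpha^2/n$ (Cauchy--Schwarz on the column cardinalities), yielding $R\le n^2/\alpha$. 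This buys you a shorter and more transparent proof, a slightly sharper threshold ($\alpha\ge n^2$ in place of $n^2+3n$), and complete freedom from the floor bookkeeping you rightly flag as the obstacle; the paper's approach, by contrast, stays purely algebraic in the defined quantities $U$ and $L$ and so does not rely on the combinatorial identification. One incidental observation: your argument never invokes $d\ge n$ explicitly---it is implicit, since $\alpha\le dn$ together with $\alpha\ge n^2$ already forces $d\ge n$.
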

\begin{proof} We focus on the first part, the second follows in an analogous way.  
Using the two assumptions, we have $\alpha(n^2 + 3n) + n^3 \leq \alpha^2 + dn^2 $. By adding $n^2+n$ to the right hand side and after reshuffling, we obtain the inequality
\[n\left[(n+1)(\alpha - d) + d-1 + n^2\right] \leq  (\alpha - n)^2.\]
For positive scalars $a,b>0$, we have the trivial estimates  $ a - b \leq \bar{a}_b \eqdef b \lfloor \frac{a}{b}\rfloor \leq a$. We use them to bound four expressions: \begin{align*}
(\alpha - d) &\geq \overline{(\alpha - d)}_{n-1} \\
n^2 &\geq (\alpha - d + 1 - \overline{(\alpha - d)}_{n-1})^2 \\
\bar{\alpha}_n^2 &\geq (\alpha - n)^2 \\
(\alpha - \bar{\alpha}_n)(2\bar{\alpha}_n + n) &\geq 0 
\end{align*}
Using these bounds one-by-one we get the result
	\begin{eqnarray*}
	n\left[(n+1)(\alpha - d) + d-1 + n^2\right] & \leq & (\alpha - n)^2 \\
	n\left[(n+1)\overline{(\alpha - d)}_{n-1} + d-1 + n^2\right] & \leq & (\alpha - n)^2 \\
	n\left[(n+1)\overline{(\alpha - d)}_{n-1} + d-1 + (\alpha - d + 1 - \overline{(\alpha - d)}_{n-1})^2\right] & \leq & (\alpha - n)^2 \\
	n\left[(n+1)\overline{(\alpha - d)}_{n-1} + d-1 + (\alpha - d + 1 - \overline{(\alpha - d)}_{n-1})^2\right] & \leq & \bar{\alpha}_n^2 \\
	n \left[ (n+1)\overline{(\alpha - d)}_{n-1} + d - 1 + (\alpha - d + 1 - \overline{(\alpha - d)}_{n-1})^2 \right] &\leq & \bar{\alpha}_n^2 + (\alpha - \bar{\alpha}_n)(2\bar{\alpha}_n + n) \\
	R(\alpha,d,n) &\leq & 1
	\end{eqnarray*}

\end{proof}

\end{document}